\documentclass[10pt,a4paper]{amsart}
\usepackage{latexsym}
\usepackage{amssymb} 
\usepackage{enumerate} 
\usepackage{ifpdf}

\usepackage{verbatim} 
\usepackage{mathrsfs} 
\usepackage{multirow} 
\usepackage{srcltx} 
\ifpdf 
 \usepackage[pdftex]{graphicx} 
 \pdfcompresslevel=9 
 \DeclareGraphicsExtensions{.pdf} 
 \usepackage{hyperref} 
\else 
 \usepackage{graphicx} 
 \DeclareGraphicsExtensions{.eps} 
 \usepackage{hyperref} 
\fi 
 
\newcommand\rightmap[1]{\smash{\mathop{\ \rightarrow\ }\limits^{#1}}} 
\newcommand\eqmap[1]{\smash{\mathop{=}\limits^{#1}}} 
\newcommand\mapstomap[1]{\smash{\mathop{\ \mapsto\ }\limits^{#1}}}
\newcommand\dotsmap[1]{\smash{\mathop{\ \dots\ }\limits^{#1}}}
\newcommand\injmap[1]{\smash{\mathop{\ \hookrightarrow\ }\limits^{#1}}}
\newcommand\downmap[1]{\downarrow\rlap{$\vcenter{\hbox{$\scriptstyle#1$}}$}}

\newcommand{\cD}{\mathcal D} 
\newcommand{\cB}{\mathcal B} 
\newcommand{\cC}{\mathcal C} 
\newcommand{\cX}{\mathcal X}

\newcommand{\cF}{\mathcal F}

\newcommand{\cL}{\mathcal L} 
\newcommand{\cM}{\mathcal M}

\newcommand{\cR}{\mathcal R} 
\newcommand{\cQ}{\mathcal Q}

\newcommand{\e}{\varepsilon}

\newcommand{\Hom}{\text{\rm Hom}} 
\newcommand{\orb}{\text{\rm orb}} 
\newcommand{\Sing}{\text{\rm Sing}}

\DeclareMathOperator{\spec}{Spec}
\newcommand{\Ceva}{\text{\rm CEVA}}
\DeclareMathOperator{\ab}{\bf ab}

\newcommand{\ZZ}{\mathbb Z} 
\newcommand{\CC}{\mathbb C}

\newcommand{\PP}{\mathbb P}

\newcommand{\TT}{\mathbb T} 
\newcommand{\FF}{\mathbb F}

\def\Char {{\rm Char}} 
\def\Spec {{\rm Spec}} 
\def\Supp {{\rm Supp}}

\def\dim{{\rm dim}}

\def\deg{{\rm \,deg\, }}

\def\cal{\mathcal} 
\def\SC{{augmented Ceva}}
 
\newtheorem{theorem}{Theorem}[section] 
\newtheorem{lemma}[theorem]{Lemma} 
\newtheorem{prop}[theorem]{Proposition} 
\newtheorem{cor}[theorem]{Corollary} 
\theoremstyle{definition} 
\newtheorem{dfn}[theorem]{Definition}

\newtheorem{example}[theorem]{Example} 
 
\theoremstyle{remark} 
\newtheorem{remark}[theorem]{Remark} 

\newcommand\enet[1]{\renewcommand\theenumi{#1}
\renewcommand\labelenumi{\theenumi}}

\begin{document} 
 
\title{Depth of characters of curve complements and orbifold pencils}
\author{E. Artal Bartolo, J.I.~Cogolludo-Agust{\'\i}n and A.~Libgober} 


\address{Departamento de Matem\'aticas, IUMA\\ 
Universidad de Zaragoza\\ 
C.~Pedro Cerbuna 12\\ 
50009 Zaragoza, Spain} 
\email{artal@unizar.es,jicogo@unizar.es} 
 
\address{ 
Department of Mathematics\\ 
University of Illinois\\ 
851 S.~Morgan Str.\\ 
Chicago, IL 60607} 
\email{libgober@math.uic.edu} 

\thanks{Partially supported by the Spanish Ministry of
Education MTM2010-21740-C02-02. 
The third named author was also partially supported by an NSF grant.} 
 

\subjclass[2000]{14H30, 14J30, 14H50, 11G05, 57M12, 14H52} 

\begin{abstract}
The present work is a user's guide to the results of~\cite{acl-depth},
where a description of the space of characters of a quasi-projective 
variety was given in terms of global quotient orbifold pencils.

Below we consider the case of plane curve complements and hyperplane arrangements.
In particular, an infinite family of curves exhibiting characters of any torsion and 
depth~3 will be discussed. Also, in the context of line arrangements, it will be shown 
how geometric tools, such as the existence of orbifold pencils, can replace the group 
theoretical computations via fundamental groups when studying characters of finite order, 
specially order two. Finally, we revisit an Alexander-equivalent Zariski pair considered 
in the literature and show how the existence of such pencils distinguishes both curves.
\end{abstract}

\maketitle

\section{Introduction}
Let $\cal X$ be the complement of a reduced (possibly reducible) 
projective curve $\cD$ in the complex projective plane $\PP^2$. 
The space of characters of the fundamental group 
$\Char(\cX)=\Hom(\pi_1(\cX),\CC^*)$ has an interesting 
stratification by subspaces, given by the cohomology of 
the rank one local system associated 
with the character:
\begin{equation}\label{jump}
V_k(\cX):=\{\chi \mid \dim H^1(\cal X,\chi) = k \}.
\end{equation}
The closures of these jumping loci 
in $\Char(\cX)$ 
were called in~\cite{charvar} the characteristic varieties of $\cX$. 
More precisely, the 
characteristic varieties associated to $\cX$  were defined in~\cite{charvar}
as the zero sets of Fitting ideals of the $\CC[\pi_1/\pi_1']$-module 
which is the complexification $\pi_1'/\pi_1'' \otimes \CC$ 
of the abelianized commutator of the fundamental 
group $\pi_1(\cX)$ (cf. section~\ref{prelim} for more details). 
In particular the characteristic varieties (unlike the jumping sets 
of the cohomology dimension greater than one) 
depend only on the fundamental group. Fox calculus provides
an effective method for calculating the characteristic varieties if a presentation of the fundamental 
group by generators and relators is known. 

For each character $\chi \in \Hom(\pi_1(\cal X),\CC^*)$ the 
\emph{depth} 
was defined in~\cite{charvar} as
\begin{equation}
d(\chi):=\dim H^1(\cal X,\chi)
\end{equation} 
so that the strata~\eqref{jump} are the sets on which $d(\chi)$ is constant.

In~\cite{acl-depth}, we describe a geometric interpretation  
of the depth by relating it to the pencils on $\cX$ i.e. 
holomorphic maps $\cX \rightarrow C, \ {\rm dim} C=1$  
having multiple fibers.
In fact the discussion in~\cite{acl-depth} is in a more 
general context in which $\cX$ is a smooth quasi-projective variety.
\footnote{Much of the discussion in the first two sections below 
applies to general quasi-projective varieties 
(cf.~\cite{acl-depth}), but in the present paper we will stay in the hypersurface complement context.
Note that the characteristic varieties only depend on the fundamental group, hence, by the Lefschetz-type 
theorems it is enough to consider the curve complement class.}
The viewpoint of~\cite{acl-depth} (and~\cite{acm-charvar-orbifolds})
is that such a pencil can be considered as a map in the category of orbifolds.
The orbifold structure of the curve $C$ is matched by the 
structure of multiple fibers of the pencil.
The main result of \cite{acl-depth} can be stated as follows: 

\begin{theorem} 
\label{thm-main}
Let $(\cX,\chi)$ be a pair consisting of a smooth quasi-projective 
manifold $\cX$, whose smooth compactification $\bar \cX$ 
satisfies $H^1(\bar \cX,\CC)=0$, and let $\chi$ be a character 
of the fundamental group of $\cX$. Assume that the depth of $\chi$ 
is positive.
Then there is a (possibly non-compact)
orbifold curve $\cC$, a character $\rho$ of
its orbifold fundamental group $\pi_1^{orb}(\cC)$ and 
$n$ strongly independent orbifold pencils $f_i: \cX \rightarrow \cC$ such 
that $f_i^*(\rho)=\chi$.
Moreover,  
\begin{enumerate}
\enet{\rm(\arabic{enumi})}
\item\label{thm-main-part1}
$d(\chi) \ge n d(\rho)$.

\item\label{thm-main-part2}
If $\cC$ is a global quotient orbifold of a one-dimensional algebraic group, then the inequality above
is in fact an equality, $d(\chi) = n d(\rho)$.
\end{enumerate}

Vice versa, given an orbifold pencil $f: \cX \rightarrow \cC$ and a character
$\rho \in \Char \pi_1^{orb}(\cC)$ with a positive depth then 
$\chi=f^*(\rho)$ has a positive depth at least as large as $d(\rho)$.
\end{theorem}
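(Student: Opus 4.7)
The plan is to adapt the Castelnuovo--de Franchis and Arapura strategy for characteristic varieties to the orbifold, quasi-projective setting. Given $(\cX,\chi)$ with $d(\chi)>0$, so that $H^1(\cX,\chi)\neq 0$, I would first represent cohomology classes by logarithmic $1$-forms on a smooth compactification $\bar\cX$ with poles along the compactifying divisor $D=\bar\cX\setminus\cX$. The hypothesis $H^1(\bar\cX,\CC)=0$ is exactly what concentrates the relevant Deligne-type Hodge data in a single filtration step and makes these forms manageable; the residues along $D$ encode the character $\chi$.

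Next I would run an orbifold Castelnuovo--de Franchis argument. Two such logarithmic forms $\omega_1,\omega_2$ satisfying $\omega_1\wedge\omega_2=0$ are proportional via a meromorphic function that defines a holomorphic map to a curve. In the presence of $\chi$, the residues of the $\omega_i$ detect the local monodromies of $\chi$, and these local monodromies impose an orbifold structure on the target curve $\cC$: the multiplicity at a point $p\in\cC$ is the order of the local monodromy of $\chi$ around a generic fiber over $p$. This simultaneously produces an orbifold pencil $f\colon\cX\to\cC$ together with a character $\rho$ of $\pi_1^{\orb}(\cC)$ satisfying $f^*\rho=\chi$. Applying the construction to a maximal family of such pairs that all factor through the same target $(\cC,\rho)$ yields pencils $f_1,\dots,f_n$; strong independence is built in so that their pullbacks contribute linearly independent subspaces of $H^1(\cX,\chi)$, each of dimension at least $d(\rho)$, yielding $d(\chi)\ge n\,d(\rho)$. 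The converse, that any single orbifold pencil pulls back a positive-depth $\rho$ to a character of depth at least $d(\rho)$, follows from injectivity of $f^*$ on $H^1$, verified by a transfer/averaging argument on the universal cover of $\cC$.

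For Part~(\ref{thm-main-part2}), if $\cC$ is a global quotient orbifold of $\CC$, $\CC^*$, or an elliptic curve, then $H^1(\cC,\rho)$ admits an explicit description in terms of the quotient presentation, and one checks by direct comparison that the pulled-back subspaces already exhaust $H^1(\cX,\chi)$, upgrading the inequality to equality.

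The main obstacle I expect is the coherence of the construction: a priori each Castelnuovo--de Franchis pair produces its own target curve and its own orbifold enhancement, and one must show that sufficiently many of them can be arranged to factor through a common $(\cC,\rho)$. This requires isolating the component of the characteristic variety through which $\chi$ is to be described and producing the universal orbifold pencil attached to that component, so that all the $f_i$ can be recovered as compositions. A secondary subtlety is promoting linear independence of the defining logarithmic $1$-forms to strong independence of the pencils themselves, which essentially amounts to ruling out that two such pencils factor through a common intermediate map to an orbifold curve.
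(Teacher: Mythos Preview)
The paper does not contain a proof of this theorem. Theorem~\ref{thm-main} is explicitly introduced as ``the main result of~\cite{acl-depth}'' and the present paper is described in the abstract and introduction as a ``user's guide'' to that companion work; its purpose is to \emph{illustrate} the statement via the Fermat, Ceva, and sextic examples in Sections~\ref{sec-fermat}--\ref{sec-zariski-pair}, not to supply an independent argument. Consequently there is no proof here against which your proposal can be compared.

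That said, a few remarks on your outline are in order. The broad strategy you sketch---an orbifold Castelnuovo--de Franchis argument producing the pencil, followed by an analysis of $H^1$ to get the inequality and a direct computation on global quotients of $\CC$, $\CC^*$, or an elliptic curve for the equality---is the natural line of attack and is consistent with the ingredients the paper does cite, namely Arapura's Theorem~\ref{arapurath} and the orbifold refinement Theorem~\ref{thmprin} from~\cite{acm-charvar-orbifolds}. However, what you have written is a plan rather than a proof, and you yourself flag the two genuine difficulties: (i) arranging that the various Castelnuovo--de Franchis outputs share a \emph{common} marked target $(\cC,\rho)$, and (ii) upgrading linear independence of logarithmic forms to \emph{strong} independence of pencils in the sense of Definition~\ref{def-indep}, which is a statement about surjectivity of $\bigoplus\Phi_{i,*}$ on $H_1$ of the $G$-cover, not merely about cohomology classes on $\cX$. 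Neither of these is handled in your sketch, and both are where the actual work in~\cite{acl-depth} presumably lies. Your treatment of Part~\ref{thm-main-part2} is also too brief: the equality requires showing that \emph{every} class in $H^1(\cX,\chi)$ is accounted for by the pencils, which is a nontrivial surjectivity statement rather than a computation on the target alone.
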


We refer to section~\ref{prelim} for all the required definitions,
and in particular, the definition of strongly independent pencils.

According to this result, given a character $\chi$ with a positive depth, one 
automatically has an orbifold pencil with the 
depth $d(\chi)$ being bounded below by the constant specified in
Theorem~\ref{thm-main}\ref{thm-main-part1} by the geometric data.
One can compare this statement with the numerous previous results 
on existence of pencils on quasi-projective manifolds which 
mostly guarantee existence of ordinary pencils corresponding 
to the components of~\eqref{jump} having a positive dimension
(cf.~\cite{simp,arapura} and references therein).
For example, if $\chi$ has a positive depth and an infinite order, then it 
must belong to a component of characteristic variety 
of positive dimension since the isolated points of characteristic 
variety have a finite order by~\cite{nonvanishing}. Hence
the results of~\cite{arapura} can be applied to such a component to obtain 
a pencil $f: \cX \rightarrow \cC$ and a character $\rho \in \Char \pi_1(\cC)$
such that $\chi=f^*(\rho)$. Here $\cC$ is the 
complement in $\PP^1$ to a finite set containing say $d>2$ points.
Moreover, the number of independent pencils in the sense of section~\ref{prelim} 
is equal to one (cf. Remark~\ref{rem-non-orbifold}) and  
the depth of $\rho \in \Char \pi_1(\cC)$ is equal to $d-2$). 
Since it was show in \cite{arapura} that 
${\rm dim} H^1(\cX,\chi) \ge {\rm dim} H^1(\cC,\rho)$
the inequality in Theorem~\ref{thm-main}\ref{thm-main-part1}
follows from the latter. Note that the orbifold
pencils, which Theorem~\ref{thm-main} claims, \emph{without the orbifold structure} 
are just rational pencils and the connection with the jumping loci disappears. 

The goal of this paper, is to illustrate in detail 
both parts~\ref{thm-main-part1} and~\ref{thm-main-part2} of Theorem~\ref{thm-main} with examples 
in which orbifolds are unavoidable. We start with a section 
reviewing mainly known results on the cohomology of local systems, 
characteristic varieties, orbifolds, and Zariski pairs making
possible to read the rest of the paper unless one is interested 
in the proofs of mentioned results.
Then, firstly, in section~\ref{sec-fermat}, a family of curves is considered 
for which the characteristic variety containing isolated characters 
having torsion or arbitrary finite order 
and whose depth is~3. The calculations illustrate the use 
of Fox calculus for finding explicit form description 
of the characteristic varieties. Secondly, in the context of line arrangements,
examples of Ceva and \SC\ arrangements are considered in section~\ref{sec-ceva}. Their characteristic 
varieties have been studied in the literature via computer aided calculations based on fundamental group 
presentations and Fox calculus. Here we present an alternative way to study such varieties independent of 
the fundamental group illustrating the geometric approach of Theorem~\ref{thm-main}. 
Finally, in section~\ref{sec-zariski-pair} we discuss on a Zariski pair of sextic
curves, whose Alexander polynomials coincide. We determine this Zariski pair
by the existence of orbifold pencils.

\subsection*{Acknowledgments}
The authors want to express thanks to the organizers of the 
\emph{Intensive research period on Configuration Spaces: Geometry, Combinatorics and Topology,} 
at the \emph{Centro di Ricerca Matematica Ennio De Giorgi}, Pisa, in May 2010   
for their hospitality and excellent working atmosphere. 
The second and third named authors want to thank the University of Illinois and Zaragoza respectively,
where part of the work on the material of this paper was done during their visit in Spring and Summer 2011.

\section{Preliminaries}\label{prelim}
\label{sec-preliminaries}

In this section the necessary definitions 
used in Theorem~\ref{thm-main} will be reviewed
together with material on 
the characteristic varieties and Zariski pairs with the 
aim to keep the discussion of the upcoming sections in 
a reasonably self-contained manner.

\subsection{Characteristic varieties}
\label{libg-var}
\mbox{}

Characteristic varieties appeared first in the literature in the context of algebraic curves 
in~\cite{Libgober-homology}. They can be defined as follows.

Let $\cD:=\cD_1\cup\dots \cup \cD_r$ be the decomposition of 
a reduced curve $\cD$ into irreducible components
and let  $d_i:=\deg \cD_i$ denote the degrees of 
the components $\cD_i$. Let $\tau:=\gcd(d_1,\dots,d_r)$ 
and $\cX=\PP^2 \setminus\cD$. Then (cf.~\cite{charvar})
\begin{equation}
\label{eq-h1}
H_1(\cX;\ZZ)={\left\langle \bigoplus_{i=1}^r \gamma_i \ZZ \right\rangle}/
{\langle d_1 \gamma_1+\dots+d_r \gamma_r\rangle}\approx 
\ZZ^{r-1}\oplus {\ZZ}/{\tau \ZZ},
\end{equation}
where $\gamma_i$ is the homology class of a meridian of $\cD_i$
 (i.e. the boundary of small disk transversal to $\cD_i$ at 
a smooth point).

Let $\ab:G:=\pi_1(\cX)\to H_1(\cX;\ZZ)$ be epimorphism of abelianization.
The kernel $G'$ of $\ab$, i.e. the commutator of $G$, 
defines the universal Abelian covering of $\cX$, say
$\cX_{\ab} \rightmap{\pi} \cX$, whose group of deck transformations is $H_1(\cX;\ZZ)=G/G'$.
This group of deck transformations, since it acts on $\cX_{\ab}$, also 
acts on $H_1(\cX_{\ab};\ZZ)=G'/G''$. 
\footnote{this action corresponds to the action of $G/G'$ on $G'/G''$ by conjugation} 
This allows to endow $M_{\cD,\ab}:=H_1(\cX_{\ab};\ZZ)\otimes \CC$ 
(as well as $\tilde M_{\cD,\ab}:=H_1(\cX_{\ab},\pi^{-1}(*);\ZZ)\otimes \CC$)
with a structure of $\Lambda_\cD$-module where 
\begin{equation}
\label{eq-ring}
\Lambda_\cD:=\CC[G/G']\approx \CC[t_1^{\pm 1},\dots,t_r^{\pm 1}]/(t_1^{d_1}\cdot\ldots\cdot t_r^{d_r}-1).
\end{equation}

Note that $Spec \Lambda_\cD$ can be identified with the commutative 
affine algebraic group $\Char \pi_1(X)$ having $\tau$ tori ${(\CC^*)}^{r-1}$ 
as connected components. Indeed, the elements of $\Lambda_{\cD}$ can be 
viewed as the functions on the group of characters of $G$.


Since $G$ is a finitely generated group, the module $M_{\cD,\ab}$ 
(resp. $\tilde M_{\cD,\ab}$) is a finitely generated $\Lambda_\cD$-module: 
\footnote{in most interesting examples with non-cyclic $G/G'$ the group
$G'/G''$ is infinitely generated.}
in fact one can construct a presentation of $M_{\cD,\ab}$ 
(resp. $\tilde M_{\cD,\ab}$) with the number of $\Lambda_{\cD}$-generators 
at most $n \choose 2$ (resp. $n$), where $n$ is the number of generators of $G$.
If $G/G'$ is not cyclic (i.e. $r>2$ or $r\geq 2$ and $\tau>1$) then
$\Lambda_{\cD}$ is not a Principal Ideal Domain. One way to approach the 
$\Lambda_{\cD}$-module structure of both $M_{\cD,\ab}$ and $\tilde M_{\cD,\ab}$ 
is to study their Fitting ideals (cf.~\cite{eisenbud}).

Let us briefly recall the relevant definitions. Let $R$ be a commutative Noetherian 
ring with unity and $M$ a finitely generated $R$-module. Choose a finite free presentation 
for $M$, say $\phi: R^m \rightarrow R^n$, where $M=\text{coker } \phi.$ The homomorphism $\phi$ 
has an associated $(n \times m)$ matrix $A_{\phi}$ with coefficients in $R$ such that
$\phi(x)=A_{\phi} x$ (the vectors below are represented as the column matrices).
\begin{dfn}
\label{def-fitting} 
The $k$-th \emph{Fitting ideal} $F_k(M)$ of $M$ is defined as the ideal generated by
$$\begin{cases}
0 & \text{if } k \leq \max\{0,n-m\} \\
1 & \text{if } k > n \\
\text{minors of } A_{\phi} \text{ of order } (n-k+1) & \text{otherwise.}
\end{cases}
$$
It will be denoted $F_k$ if no 
ambiguity seems likely to arise.
\end{dfn}

\begin{dfn}{\cite{Libgober-homology}}
\label{def-charvar}
With the above notations the \emph{$k$-th characteristic variety}  ($k>0$)
of $\cX=\PP^2\setminus \cD$ can be defined as the zero-set of the ideal $F_k(M_{\cD,\ab})$
$$
\Char_k(\cX):=Z(F_k(M_{\cD,\ab}))\subset\Spec \Lambda_{\cD}=\Char \pi_1(\PP^2\setminus \cD).
$$

Then $V_k(\cX)$ is the set of characters in $\Char_k(\cX)$ 
which do not belong to $\Char_j(\cX)$ for $j>k$. 
If a character $\chi$ belongs to $V_k(\cX)$ 
then $k$ is called the {\it depth} of $\chi$ and denoted by $d(\chi)$
(cf. \cite{charvar}). 

An alternative notation for $\Char_k(\PP^2 \setminus \cD)$ is $\Char_{k,\PP}(\cD)$.
\end{dfn}

\begin{remark} Essentially without loss of generality one can 
consider only the cases when 
the quotient by an ideal in the definition of the ring 
$\Lambda_\cD$ in~\eqref{eq-ring} is absent
i.e. consider only the modules of the ring of Laurent polynomials. Indeed, 
consider a line $L$ not contained in $\cD$ and in general position
(i.e. which does not contain singularities of $\cD$ and is transversal to it). 
Then $\Lambda_{L\cup \cC}$ is 
isomorphic to $\CC[t_1^{\pm 1},\dots,t_r^{\pm 1}]$. Moreover, since 
we assume transversality $L\pitchfork\cD$, then
the $\Lambda_{L\cup \cD}$-module $M_{L\cup \cD,\ab}$ does not depend on $L$
(see for instance~\cite[Proposition~1.16]{ji-zariski}).
The characteristic variety $\Char_{k,\PP}(L\cup \cD)$ 
determines $\Char_{k,\PP}(\cD)$ (cf. \cite{charvar,ji-zariski}).
By abuse of language it is called the 
\emph{$k$-th affine characteristic variety} and denoted simply by $\Char_{k}(\cD)$. 


One can also use the module $\tilde M_{\cD,\ab}$ to obtain the characteristic
varieties of $\cD$. One has the following connection
$$
\Char_k(\cX)\setminus \bar 1=Z(F_{k+1}(\tilde M_{\cD,\ab}))\setminus \bar 1,
$$
where $\bar 1$ denotes the trivial character.
\end{remark}

\begin{remark} The depth of a character appears in explicit formulas 
for the first Betti number of cyclic and abelian unbranched 
and branched covering spaces (cf. \cite{Libgober-homology,eko,sakuma})
\end{remark}

\begin{remark}\label{charvargroups}
One can also define the $k$-th characteristic variety $\Char_k(G)$ of 
any finitely generated group $G$ (such that the abelianization $G/G' \ne 0$
or, more generally, for a surjection $G \rightarrow A$ where 
$A$ is an abelian group) 
as the $k$-th characteristic variety of the 
$\Lambda_G=\CC[G/G']$-module $M_G=H_1(\cX_{G,\ab})$
 obtained 
by considering the CW-complex $\cX_G$ 
associated with a presentation of $G$ and its universal 
abelian covering space $\cX_{G,\ab}$ 
(respectively considering the 
covering space of $\cX_G$ associated with the kernel of the map to $A$).
Such invariant is independent of the finite presentation of $G$
(resp. depends only on $G \rightarrow A$). This construction will 
be applied below to the orbifold fundamental groups of one dimensional 
orbifolds.
\end{remark}

\begin{remark}
Note that one has:
\begin{itemize}
\item $\Char_k(\cD)=\Supp_{\Lambda_\cD} \wedge^i(H_1(\cX_{\ab};\CC))$,
\item $\Spec\Lambda_{L\cup \cD}=\TT^r=(\CC^*)^{r}$, for the affine case, and
\item $\Spec\Lambda_\cD=\TT_\cD=\{\omega^i\}_{i=0}^{\tau-1}\times (\CC^*)^{r-1}=
V(t_1^{d_1}\cdot\ldots\cdot t_r^{d_r}-1)\subset\TT^r$,
where $\omega$ is a $\tau$-th primitive root of unity for the curves
in projective plane.
\end{itemize}

Note also that in the case of a finitely presented group $G$ such that 
$G/G'=\ZZ^r\oplus \ZZ/\tau_1\ZZ\oplus\dots\oplus \ZZ/\tau_s\ZZ$ 
one has
\begin{equation}
\Spec {\Lambda_G}=\TT_G=\{(\omega^{i_1}_1,\dots,\omega^{i_s}_s)\mid i_k=0,\dots,\tau_k-1,\ k=1,\dots,s\}
\times (\CC^*)^r,
\end{equation}
where as above
$\Lambda_G=\CC[G/G']$ and $\omega_i$ is a $\tau_i$-th primitive root of unity.
\end{remark}

Let $\cX$ be a smooth quasi-projective variety such that for its smooth compactification 
$\bar \cX$ one has $H^1(\bar \cX,\CC)=0$. This of course includes the cases $\cX=\PP^2\setminus \cD$.
The structure of the closures of the strata $V_k(\cX)$ is given by the following fundamental result.

\begin{theorem}[\cite{arapura}]
\label{arapurath} 
The closure of each $V_k(\cX)$ 
is a finite union of cosets of subgroups of $\Char(\pi_1(\cX))$. 
Moreover, for each irreducible 
component $W$ of $V_k(\cX)$ having a positive dimension  
there is a  pencil $f: \cX \rightarrow C$, where 
$C$ is a $\PP^1$ with deleted points, and a torsion character
$\chi 
\in  \Char_k(\cX)$ 
such that $W=\chi f^*H^1(C,\CC^*)$.
\end{theorem}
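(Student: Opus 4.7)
The plan is to combine the algebro-geometric nature of the jump loci with a Castelnuovo--de Franchis type argument for quasi-projective manifolds. First I would note that $V_k(\cX)$ is by construction a constructible algebraic subset of $\Char(\pi_1(\cX))$, since by Definition~\ref{def-charvar} it is cut out (up to stratification by depth) by the vanishing of Fitting ideals of a finitely generated $\Lambda_\cD$-module. So its closure is an honest algebraic subvariety of the character torus $\Spec \Lambda_\cD$, and the problem reduces to identifying the shape of its irreducible components.

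The first main task is to show that each positive-dimensional irreducible component $W$ of $\overline{V_k(\cX)}$ is a coset of a subtorus by a torsion character. For this I would use the tangent space description of $V_k$ at a unitary character via Hodge theory on the quasi-projective variety $\cX$: the condition $H^{1}(\bar\cX,\CC)=0$ together with Deligne's mixed Hodge structure on $H^{1}(\cX,\CC)$ forces the infinitesimal deformations of a jumping local system to satisfy a homogeneous quadratic equation whose isotropic subspaces, upon exponentiation, integrate to genuine subtori. The torsion translation statement then follows from Simpson's theorem that jump loci of Betti moduli spaces are defined over $\bar\QQ$ and stable under the Galois action, forcing their zero-dimensional strata (hence their translating points) to be torsion.

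Second, given such a $W$, I would extract the pencil $f:\cX\to C$ by a Castelnuovo--de Franchis argument adapted to the logarithmic setting. Choose two characters $\chi_1,\chi_2\in W$ varying along a one-parameter subtorus of $W$, and take $\omega_i\in H^{1}(\cX,\CC)$ representing tangent directions to deformations with $H^{1}(\cX,\chi_i)\neq 0$. These can be realized as logarithmic $1$-forms along $\cD$, and a dimension count on the cohomology of the associated complex forces $\omega_1\wedge\omega_2=0$. The classical Castelnuovo--de Franchis lemma, in the version due to Catanese and extended to the open case, then produces a smooth curve $C$ and a holomorphic map $f:\cX\to C$ such that both $\omega_i$ are pullbacks from $C$. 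Finally I would argue that $\bar{C}\simeq \PP^{1}$ (otherwise $H^{1}(\bar{C},\CC)\neq 0$ would contribute nontrivially to $H^{1}(\bar\cX,\CC)$, contradicting the hypothesis), so $C$ is $\PP^{1}$ with finitely many punctures, and that $f^{*}H^{1}(C,\CC^{*})$ accounts for the entire positive-dimensional direction of $W$ up to a torsion translate $\chi$.

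The step I expect to be the main obstacle is the second one: producing the actual fibration from the pair of dependent logarithmic $1$-forms. In the projective case this is a classical application of Castelnuovo--de Franchis, but in the quasi-projective setting one needs to control the polar behavior of the $\omega_i$ along $\cD$ to ensure the resulting meromorphic map extends holomorphically to $\cX$ and has connected fibers. This is precisely where Arapura's original argument deploys the full strength of mixed Hodge theory and the theory of admissible variations, and handling it carefully is the technical heart of the proof.
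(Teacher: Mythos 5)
The paper does not actually prove this statement: it is quoted as Arapura's theorem (with later refinements folded in), so there is no internal proof to compare yours against. Judged on its own, your outline reproduces the standard two-step strategy --- (i) the jump loci are algebraic and their positive-dimensional components are torsion translates of subtori, (ii) each such subtorus is pulled back from a punctured $\PP^1$ via a logarithmic Castelnuovo--de Franchis argument --- and you correctly locate the technical heart in step (ii).

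Two genuine gaps remain. First, the Castelnuovo--de Franchis step as you set it up does not quite work: you take ``tangent directions'' $\omega_1,\omega_2$ at characters $\chi_1,\chi_2$ lying on $W$, but tangent vectors to the character variety at a nontrivial character are not logarithmic $1$-forms on $\cX$ in any directly usable sense; the isotropic-subspace argument lives in $H^0(\bar\cX,\Omega^1_{\bar\cX}(\log\cD))$, i.e., at the \emph{trivial} character. The correct order of operations is to first establish the coset structure $W=\chi\cdot T$ with $T$ a subtorus, and only then run Castelnuovo--de Franchis on $T$ (equivalently, on the surjection $H_1(\cX,\ZZ)\to\ZZ^{\dim T}$ that $T$ determines). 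Moreover, when $\dim W=1$ there is only one independent form available, so the wedge argument gives nothing; such translated one-dimensional components genuinely occur (e.g., for the deleted $B_3$ arrangement in Section~4) and need a separate treatment --- this is exactly the delicate point in Arapura's proof and in its later corrections. Second, Arapura's theorem as originally proved yields only \emph{unitary} translates; the torsion statement is a subsequent refinement obtained by Simpson-type arithmetic/Galois arguments extended to the quasi-projective setting (compare the paper's reliance on \cite{nonvanishing} and \cite{dimca-pencils} for precisely this point). You do invoke Simpson, which is the right mechanism, but you should make explicit that its passage from the projective to the quasi-projective case is itself a nontrivial step and is not contained in the reference to which the theorem is attributed.
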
 

\subsection{Essential Coordinate Components}
\mbox{}

Let $\cD'\subsetneq\cD$ be curve whose components form 
a subset of the set of components of $\cD$. There is a natural epimorphism
$\pi_1(\PP^2\setminus\cD)\twoheadrightarrow\pi_1(\PP^2\setminus\cD')$ induced
by the inclusion. This surjection induces a natural inclusion
$\Spec \Lambda_{\cD'}\subset 
\Spec \Lambda_{\cD}$. With identification of the generators 
of $\Lambda_{\cD}$  with components of $\cD$ as above, 
this embedding is obtained by assigning~$1$
to the coordinates corresponding to those irreducible components of $\cD$ which
are not in $\cD'$ (cf.~\cite{charvar}).

The embedding $\Spec \Lambda_{\cD'}\subset 
\Spec \Lambda_{\cD}$ induces the embedding
$\Char_k(\cD')\subset\Char_k(\cD)$ (cf. \cite{charvar}); 
any irreducible component of $V_k(\cD')$
is the intersection of an irreducible component of $V_k(\cD)$ with $\Lambda_{\cD'}$.

\begin{dfn}
Irreducible components of $V_k(\cD)$ contained in $\Lambda_{\cD'}$ for some 
curve $\cD' \subset \cD$ are called \emph{coordinate components} of $V_k(\cD)$. 
If an irreducible coordinate component~$V$ of $V_k(\cD')$ is also an 
irreducible component of $V_k(\cD)$, then $V$ is called a \emph{non-essential coordinate component},
otherwise it is called an \emph{essential coordinate component}.
\end{dfn}


See~\cite{ArtalCogolludo} for examples. A detailed discussion of more examples 
is done in sections~\ref{sec-fermat},~\ref{sec-ceva}, and~\ref{sec-zariski-pair}.

As shown in~\cite[Lemma~1.4.3]{charvar} (see also~\cite[Proposition~3.12]{dimca-pencils}), 
essential coordinate components must be zero dimensional.

\subsection{Alexander Invariant}
\label{sec-alex-inv}

In section~\ref{libg-var} the characteristic varieties of a finitely presented group $G$ are defined as
the zeroes of the Fitting ideals of the module $M:=G'/G''$ over $G/G'$. This module is referred to in the
literature as the \emph{Alexander invariant} of $G$. Note, however, that this is not the module represented
by the matrix of Fox derivatives called the \emph{Alexander module} of $G$.

Our purpose in this section is to briefly describe the Alexander invariant for fundamental groups of complements
of plane curves and give a method to obtain a presentation of such a module from a presentation of $G$. 
In order to do so, consider $G:=\pi_1(\PP^2\setminus \cD)$ the fundamental group of the curve $\cD$.
Without loss of generality one might assume that
\begin{enumerate}
 \item[$(Z1)$] $G/G'$ is a free group of rank $r$ generated by meridians $\gamma_1, \gamma_2, ..., \gamma_r$,
\end{enumerate}
then one has the following

\begin{lemma}[{\cite[Proposition~2.3]{ji-rybnikov}}]
Any group $G$ as above satisfying $(Z1)$ admits a presentation 
\begin{equation}
\label{eq-zar-pres}
\left\langle x_1,...,x_r,y_1,...,y_s : R_1(\bar x,\bar y)=...=R_m(\bar x,\bar y)=1 \right\rangle,
\end{equation}
where $\bar x:=\{x_1,...,x_r\}$ and $\bar y:=\{y_1,...,y_s\}$ satisfying:
\begin{enumerate}
 \item[$(Z2)$] 
$\ab (x_i)=\gamma_i$, $\ab (y_j)=0$, and $R_k$ can be written in terms of $\bar y$ and 
$x_k [x_i,x_j] x_k^{-1}$, where $[x_i,x_j]$ is the commutator of $x_i$ and $x_j$.
\end{enumerate}
\end{lemma}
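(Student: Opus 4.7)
The plan is to apply the Zariski--van Kampen theorem to a generic linear projection. Identifying $\CC^2 := \PP^2 \setminus L_\infty$ for a line $L_\infty$ transverse to $\cD$, I would fix a generic linear pencil $\pi : \CC^2 \to \CC$. A generic fiber $L_t := \pi^{-1}(t)$ meets $\cD$ transversally in $d := d_1 + \cdots + d_r$ points, $d_i$ of which lie on each component $\cD_i$. Small positively oriented meridians around these points freely generate $\pi_1(L_t \setminus (L_t \cap \cD))$; label them $g_{i,j}$, for $i = 1, \dots, r$ and $j = 1, \dots, d_i$, so that $\ab(g_{i,j}) = \gamma_i$ in $G/G'$. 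Zariski--van Kampen then yields a presentation of $G$ on these generators, in which every relation has the form $g_{i,j} \cdot \beta_k(g_{i,j})^{-1} = 1$, with $\beta_k$ the braid monodromy around the $k$-th critical value of $\pi$ acting on the free group via the Artin representation --- that is, sending each generator to a conjugate of another generator.

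The next step is a change of generators tailored to (Z2). Set $x_i := g_{i,1}$ and, for $j = 2, \dots, d_i$, set $y_{i,j} := g_{i,1}^{-1} g_{i,j}$; then re-enumerate $\{y_{i,j}\}_{j \geq 2}$ as $y_1, \dots, y_s$ with $s = d - r$. We have $g_{i,j} = x_i y_{i,j}$ (with $y_{i,1} = 1$), and the abelianization part of (Z2) is immediate: $\ab(x_i) = \gamma_i$ and $\ab(y_\ell) = 0$. For the second part of (Z2), each relation $R_k$ must be rewritten as a word in $\bar x$ and $\bar y$ of the prescribed form. Substituting $g_{a,b} = x_a y_{a,b}$ throughout $R_k$ and applying the identity $uv = [u, v] \cdot v u$ repeatedly to migrate all $y$-generators to one side yields an expression which, once the $y$'s are factored out, lies in the commutator subgroup of $F_{\bar x}$. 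Iteratively using the commutator identities $[ab, c] = a [b, c] a^{-1} \cdot [a, c]$ and $[a, bc] = [a, b] \cdot b [a, c] b^{-1}$ then reduces this to a product of the basic conjugates $x_k [x_i, x_j] x_k^{-1}$.

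The main obstacle is to make the rewriting in the second step completely explicit rather than merely abelian. The abelian content is automatic: both $g_{i,j}$ and its braid image $\beta_k(g_{i,j})$ have abelianization $\gamma_i$, so after killing the normal closure of $\bar y$ the relation $R_k$ projects into $[F_{\bar x}, F_{\bar x}]$. The sharper assertion in (Z2) is that $R_k$ is a \emph{literal} product of $y_\ell$'s and conjugates $x_k[x_i,x_j] x_k^{-1}$, which I expect to verify by induction on the length of the Artin word $\beta_k$: each elementary Artin generator acts by a single conjugation $g \mapsto w g w^{-1}$, and its contribution can be absorbed into one extra conjugate of a basic commutator together with a modification of the $y$-tail, preserving the prescribed form inductively.
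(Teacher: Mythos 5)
Note first that the paper does not prove this lemma at all: it is quoted verbatim from \cite[Proposition~2.3]{ji-rybnikov}, so there is no internal argument to compare yours against. Your Zariski--van Kampen route (braid monodromy relators $g_{i,j}\beta_k(g_{i,j})^{-1}$, then the change of generators $x_i:=g_{i,1}$, $y_{i,j}:=g_{i,1}^{-1}g_{i,j}$) is the natural one and is consistent with how the cited source and the rest of Section~\ref{sec-alex-inv} use the result.

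The step you flag as ``the main obstacle'' is, however, where you over-engineer, and the induction on the length of the Artin word is not the right tool: read literally, it is aimed at producing a product of conjugates of $[x_i,x_j]$ by \emph{single generators}, which is stronger than what is true or needed (already $[x_1,y_1]$ shows that conjugates of the $y_j$ must also be allowed). The efficient argument is soft. Let $F=F(\bar x,\bar y)$ and let $H\le F$ be the subgroup generated by \emph{all} conjugates $w[x_i,x_j]w^{-1}$ and $w y_j w^{-1}$ with $w\in F$. Then $H$ is normal (a conjugate of a generator is a generator), $F/H$ is abelian and generated by the images of the $x_i$, and the homomorphism $F\to\ZZ^r$, $x_i\mapsto\gamma_i$, $y_j\mapsto 0$, kills $H$; hence $H$ is exactly its kernel. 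Consequently \emph{every} relator with trivial image in $\ZZ^r$ is automatically a product of such conjugates, with no analysis of the braid word whatsoever. The only geometric input needed is that $\beta_k(g_{i,j})$ is a conjugate of a meridian of the \emph{same} component $\cD_i$ (braid monodromy preserves the partition of the fiber points by components), so each relator does die in $\ZZ^r$. Conjugation by arbitrary words, rather than by single generators, is exactly what is consumed downstream: in the module $M$ one has $w[x_i,x_j]w^{-1}\ \eqmap{M}\ t_w\,x_{ij}$ and $w y_j w^{-1}\ \eqmap{M}\ t_w\,y_j$, where $t_w$ denotes the image of $w$ in the multiplicative $G/G'$, and this is all that Example~\ref{exam-rel2} and Proposition~\ref{prop-M2} require. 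Finally, be explicit that $(Z1)$ puts you in the affine situation (a generic line has been adjoined to $\cD$, or equivalently you present $\pi_1(\CC^2\setminus\cD)$ with no relation at infinity): otherwise the projective relator $\prod g_{i,j}$ has abelianization $\sum d_i\gamma_i\neq 0$ and cannot be of the form required by $(Z2)$.
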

A presentation satisfying $(Z2)$ is called a \emph{Zariski presentation} of~$G$.

From now on we will assume $G$ admits a Zariski presentation as in~\eqref{eq-zar-pres}.
In order to describe elements of the module $M$ it is sometimes convenient to see $\ZZ[G/G']$ as
the ring of Laurent polynomials in $r$ variables $\ZZ[t_1^{\pm 1},...,t_r^{\pm 1}]$, where $t_i$
represents the action induced by $\gamma_i$ on $M$ as a multiplicative action, that is,
\begin{equation}
\label{eq-action-M}
t_i g\ \eqmap{M}\ x_i g x_i^{-1} 
\end{equation}
for any $g\in G'$.

\begin{remark}
\mbox{}
\begin{enumerate}
 \item One of course needs to convince oneself that action~\eqref{eq-action-M} is independent, up to
an element of $G''$, of the representative $x_i$ as long as $\ab (x_i)=\gamma_i$. This is an easy exercise.
 \item We denote by ``$\eqmap{M}$'' equalities that are valid in $M$.
\end{enumerate}
\end{remark}

\begin{example}
\label{exam-rel}
Note that 
\begin{equation}
\label{eq-comm-M}
[xy,z]\ \eqmap{M}\ [x,z]+t_x[y,z],
\end{equation} 
where $x$, $y$, and $z$ are elements of $G$ and $t_x$ denotes
$\ab (x)$ in the multiplicative group. This is a consequence of the following
$$
[xy,z]=xyzy^{-1}x^{-1}z^{-1}=x(yzy^{-1}z^{-1})x^{-1} xzx^{-1}z^{-1}\ \eqmap{M}\ t_x[y,z]+[x,z].
$$
As a useful application of~\eqref{eq-comm-M} one can check that 
\begin{equation}
\label{eq-comm-M2}
[x^y,z]\ \eqmap{M}\ [x,z]+(t_z-1)[y,x],
\end{equation} 
where $x^y:=yxy^{-1}$.
\end{example}

Note that $x_{ij}:=[x_i,x_j]$, $1\leq i<j \leq r$ and $y_k$, $k=1,...,s$ are elements in $G'$, since
$\ab (x_{ij})=\ab (y_k)=0$. Therefore 
\begin{equation}
\label{eq-xijk}
x_k [x_i,x_j] x_k^{-1}\ \eqmap{M}\ t_k x_{i,j}
\end{equation} 
(see~\eqref{eq-action-M} and~$(Z2)$). Moreover,

\begin{prop}
\label{prop-M1}
For a group $G$ as above, the module $M$ is generated by $\bar x_{i,j}:=\{x_{ij}\}_{1\leq i<j \leq r}$ and 
$\bar y:=\{y_k\}_{k=1,...,s}$.
\end{prop}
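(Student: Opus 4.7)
The plan is to prove the proposition in two steps: first showing that $S:=\{x_{ij}\}_{1\le i<j\le r}\cup\{y_k\}_{1\le k\le s}$ normally generates $G'$ inside $G$, and then upgrading this to a generation statement for $M=G'/G''$ as a $\ZZ[G/G']$-module by exploiting the fact that conjugation by $G'$ acts trivially on $M$.

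For the first step, let $N$ be the normal closure of $S$ in $G$. Every element of $S$ lies in $G'$: the $x_{ij}$ are commutators, and $\ab(y_k)=0$ by property $(Z2)$. Hence $N\subseteq G'$. Conversely, in $G/N$ the images of the $x_i$ pairwise commute (because all $x_{ij}\in N$) and the images of the $y_k$ are trivial. Since $\{x_i,y_k\}$ generate $G$, this forces $G/N$ to be abelian, so $G'\subseteq N$. Therefore $N=G'$, meaning every $g\in G'$ can be written as a finite product
\[
g=\prod_\ell g_\ell\, s_\ell^{\varepsilon_\ell}\,g_\ell^{-1},\qquad s_\ell\in S,\ \varepsilon_\ell\in\{\pm1\},\ g_\ell\in G.
\]

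For the second step, I would observe that in $M$ the conjugation action of $G$ on $G'/G''$ factors through $G/G'$. Indeed, if $h\in G'$ and $s\in G'$, then $hsh^{-1}s^{-1}=[h,s]\in[G',G']=G''$, so $h$ acts trivially on $M$. Consequently the class of $g_\ell s_\ell g_\ell^{-1}$ in $M$ depends only on $\ab(g_\ell)\in G/G'$, and by the definition of the action in~\eqref{eq-action-M} this class equals $\ab(g_\ell)\cdot s_\ell$. Taking the image of $g$ in $M$ turns the group-theoretic product above into a $\ZZ[G/G']$-linear combination of (the images of) the elements of~$S$, which is precisely what we need.

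The argument is essentially bookkeeping; the only point worth any care is the compatibility between the normal-closure description of $G'$ and the module structure on $M$, namely that conjugation by $G'$ is trivial on $M$. This is a one-line consequence of $[G',G']=G''$ and is the single fact that makes the passage from ``normal generators of $G'$'' to ``$\ZZ[G/G']$-generators of $M$'' automatic.
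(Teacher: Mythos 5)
Your argument is correct and is essentially the standard one underlying the paper's setup (the paper states the proposition without a detailed proof, relying on the module action \eqref{eq-action-M} and the observation \eqref{eq-xijk}): show $S$ normally generates $G'$, then use that conjugation by $G'$ is trivial on $M=G'/G''$ to convert products of conjugates into $\ZZ[G/G']$-linear combinations. No gaps; the one point needing care --- well-definedness of the $G/G'$-action, i.e.\ $[G',G']=G''$ --- is exactly the one you flagged.
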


\begin{example}
\label{exam-rel2}
The module $M$ is not freely generated by the set mentioned above, for instance, note that according 
to~$(Z2)$ and~\eqref{eq-xijk} any relation in $G$, say $R_i(\bar x,\bar y)=1$ (as in~\eqref{eq-zar-pres}) 
can be written (in $M$) in terms of $\bar \{x_{ij}\}$ and $\bar y$ as $\cR_i(\bar x_{ij},\bar y)$. In other 
words, $\cR_i(\bar x_{ij},\bar y)=0$ is a relation in $M$.
\end{example}

\begin{example}
\label{exam-rel3}
Even if $G$ were to be the free group $\FF_r$, $M$ would not be freely generated by
$\bar \{x_{ij}\}$ and $\bar y$. In fact,
\begin{equation}
\label{eq-J}
J(x,y,z):=(t_x-1)[y,z]+(t_y-1)[z,x]+(t_z-1)[x,y]\ \eqmap{M}\ 0
\end{equation}
for any $x$, $y$, $z$ in $G$. Using Example~\ref{exam-rel} repeatedly, one can check the following
\begin{equation}
\label{eq-J2}
[xy,z] =
\left\{
\array{l}
\eqmap{M}\ [x,z]+t_x[y,z]\\
= [y^{x^{-1}}x,z]\ \eqmap{M}\ [y^{x^{-1}},z]+ t_y [x,z]\ \eqmap{M}\ [y,z]-(t_z-1)[x,y]+t_y [x,z],\\
\endarray
\right.
\end{equation}
where $a^b=bab^{-1}$. The difference between both equalities results in $J(x,y,z)=0$.
Such relations will be referred to as \emph{Jacobian relations} of $M$.
\end{example}

A combination of Examples~\ref{exam-rel2} and~\ref{exam-rel3} gives in fact a presentation of $M$.

\begin{prop}[{\cite[Proposition~2.39]{ji-zariski}}]
\label{prop-M2}
The set of relations $\cR_1$,...,$\cR_m$ as described in Example~\ref{exam-rel2} and 
$J(i,j,k)=J(x_i,x_j,x_k)$ as described in Example~\ref{exam-rel3} is a complete system of relations for $M$.
\end{prop}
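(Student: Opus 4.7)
The plan is to realise $M=G'/G''$ as the first homology of the universal abelian cover of the presentation $2$-complex of~\eqref{eq-zar-pres} and then to dissect the resulting chain complex by a combination of a Koszul-type argument for $\ker\partial_1$ and a Fox-calculus computation for $\mathrm{im}\,\partial_2$ tailored to the particular shape of the relators permitted by $(Z2)$.

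Let $K$ be the $2$-complex associated with the Zariski presentation and let $\tilde K\to K$ be the universal abelian cover, with deck group $G/G'$. The equivariant cellular chain complex
$$
C_2(\tilde K)\xrightarrow{\partial_2} C_1(\tilde K)\xrightarrow{\partial_1} C_0(\tilde K)
$$
consists of free $\Lambda_G:=\ZZ[G/G']$-modules with bases indexed by $R_1,\dots,R_m$, by $x_1,\dots,x_r,y_1,\dots,y_s$, and by the unique $0$-cell, respectively. Since $\pi_1(\tilde K)=G'$, one has $H_1(\tilde K)=G'/G''=M$, so the task reduces to describing $\ker\partial_1$ and $\mathrm{im}\,\partial_2$ in terms of the generating set $\{x_{ij}\}\cup\{y_k\}$ provided by Proposition~\ref{prop-M1}.

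For $\ker\partial_1$, hypotheses $(Z1)$--$(Z2)$ force $\partial_1(e_{x_i})=t_i-1$ and $\partial_1(e_{y_k})=0$. Hence the kernel splits as the free submodule on the $e_{y_k}$ together with the first syzygy module of the sequence $(t_1-1,\dots,t_r-1)$ inside the Laurent polynomial ring $\Lambda_G=\ZZ[t_1^{\pm 1},\dots,t_r^{\pm 1}]$. This sequence is regular, so the Koszul complex $K_\bullet(t_1-1,\dots,t_r-1)$ is a free resolution of $\ZZ$; its first syzygies are generated by the Koszul $2$-cycles
$$
\varepsilon_{ij}:=(t_j-1)e_{x_i}-(t_i-1)e_{x_j}\qquad (1\le i<j\le r),
$$
subject only to relations generated by the Koszul $3$-cycles, which are precisely the Jacobi expressions $J(i,j,k)$ of~\eqref{eq-J}. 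Under the identification $H_1(\tilde K)=M$, $\varepsilon_{ij}$ represents $[x_i,x_j]=x_{ij}$ and $e_{y_k}$ represents $y_k$, so this already exhibits $\ker\partial_1$ as the quotient of the free $\Lambda_G$-module on $\{x_{ij}\}\cup\{y_k\}$ by the Jacobi relations.

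For $\mathrm{im}\,\partial_2$, the boundary is given by Fox derivatives, $\partial_2(e_{R_i})=\sum_j(\partial R_i/\partial x_j)\,e_{x_j}+\sum_k(\partial R_i/\partial y_k)\,e_{y_k}$. By $(Z2)$ each $R_i$ is a word in letters $y_k^{\pm 1}$ and $x_\ell[x_p,x_q]^{\pm 1}x_\ell^{-1}$; applying the product rule for Fox derivatives to such a word, and using that conjugation by $x_\ell$ acts on $M$ by multiplication by $t_\ell$ (equation~\eqref{eq-action-M}), one checks that $\partial_2(e_{R_i})$ lies in $\ker\partial_1$ and, once rewritten in the generators $\varepsilon_{pq},e_{y_k}$, reproduces letter by letter the combination $\cR_i(\bar x_{pq},\bar y)$ of Example~\ref{exam-rel2}. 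Combined with the previous paragraph, this yields the presentation asserted by the proposition. The main obstacle is precisely the bookkeeping in this last step: verifying that the Fox derivatives of a relator of the special shape dictated by $(Z2)$, when reduced modulo the Koszul syzygies defining the generators of $\ker\partial_1$, really assemble into the commutator-theoretic expression $\cR_i$; this uses~\eqref{eq-comm-M} and~\eqref{eq-comm-M2} repeatedly and is the only point at which the concrete form of the Zariski presentation is essential, the regularity of $(t_1-1,\dots,t_r-1)$ underlying the Koszul step being in turn the reflection of $(Z1)$.
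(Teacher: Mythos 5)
Your argument is correct and is essentially the standard proof of this presentation (the paper itself gives no proof, deferring to the cited reference, whose argument runs along the same lines): compute $H_1$ of the universal abelian cover of the presentation complex, use the Koszul resolution of $\ZZ$ over $\ZZ[t_1^{\pm1},\dots,t_r^{\pm1}]$ (valid since $(t_1-1,\dots,t_r-1)$ is regular, which is where $(Z1)$ enters) to present $\ker\partial_1$ by the cycles $\varepsilon_{ij}$ and the Jacobi relations, and let $\partial_2$ contribute the relations $\cR_i$ via the Fox-derivative rewriting permitted by $(Z2)$. The only caveat is a harmless sign convention: the Fox-derivative chain of the loop $[x_i,x_j]$ is $(1-t_j)e_{x_i}+(t_i-1)e_{x_j}=-\varepsilon_{ij}$, so your $\varepsilon_{ij}$ actually represents $[x_j,x_i]$; this does not affect the conclusion.
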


\begin{example}
Let $G=\FF_r$ be the free group in $r$ generators, for instance, the fundamental group of the complement to the 
union of $r+1$ concurrent lines. According to Propositions~\ref{prop-M1} and~\ref{prop-M2}, $M$ has a presentation
matrix $A_r$ of size ${{r}\choose {3}} \times {{r}\choose {2}}$ whose columns correspond to the generators 
$x_{ij}=[x_i,x_j]$ and whose rows correspond to the coefficients of the Jacobian relations 
$J(i,j,k)$, $1\leq i<j<k \leq r$. For instance, if $r=4$
$$
A_4:=
\left[
\begin{matrix}
(t_3-1) & -(t_2-1) & 0 & (t_1-1) & 0 & 0\\
(t_4-1) & 0 & -(t_2-1) & 0 & (t_1-1) & 0\\
0 & (t_4-1) & -(t_3-1) & 0 & 0 & (t_1-1)\\
0 & 0 & 0 & (t_4-1) & -(t_3-1) & (t_2-1)\\
\end{matrix}
\right].
$$
Such matrices have rank ${{r-1}\choose {2}}$ if $t_i\neq 1$ for all $i=1,...,r$, and hence the depth of
a non-coordinate character is $r-1$. On the other hand, for the trivial character $\bar 1$, the matrix $A_n$ 
has rank~0 and hence $\bar 1$ has depth ${{r}\choose {2}}$ (see Definitions~\ref{def-fitting} and~\ref{def-charvar} 
for details on the connection between the rank of $A_n$ and the depth of a character). 
\end{example}

\subsection{Orbicurves} 

As a general reference for orbifolds and orbifold fundamental groups 
one can use ~\cite{adem}, see
also~\cite{friedman,scott}. A brief description of what will be used here follows.

\begin{dfn}  
\label{def-global} 
An \emph{orbicurve} is a complex orbifold of dimension equal to one.
An orbicurve $\cC$ is called a \emph{global quotient} if there exists a finite group $G$ acting effectively on 
a Riemann surface $C$ such that $\cC$ is the quotient of $C$ by $G$ 
with the orbifold structure given by the stabilizers of the $G$-action on~$C$.
\end{dfn}

We may think of $\cC$ as a Riemann surface with a finite number of points $R:=\{P_1,...,P_s\}\subset \cC$ 
labeled with positive integers $\{m(P_1),...,m(P_s)\}$ 
(for global quotients those are the orders of stabilizers of 
action of $G$ on $C$). A neighborhood of a point $P\in \cC$ with 
$m(P)>0$ is the quotient of a disk (centered at~$P$) by 
an action of the cyclic group of order $m(P)$ (a rotation).
 
A small loop around $P$ is considered to be trivial in $\cC$ if its 
lifting in the above quotient map bounds a disk. Following this 
idea, orbifold fundamental groups can be defined as follows.

\begin{dfn}\label{dfn-group-orb}(cf.~\cite{adem,scott,friedman})
Consider an orbifold $\cC$ as above, then the \emph{orbifold fundamental group} of $\cC$ is
$$
\pi_1^\orb(\cC):=\pi_1(\cC\setminus\{P_1,\dots,P_s\})/\langle\mu_j^{m_j}=1\rangle
$$
where $\mu_j$ is a meridian of $P_j$ and $m_j:=m(P_j)$.
\end{dfn}

According to Remark~\ref{charvargroups} the Definition~\ref{def-charvar} can be applied 
to the case of finitely generated groups. In particular one defines the $k$-th 
characteristic variety $\Char_k(\cC)$ of an orbicurve $\cC$ as $\Char_k(\pi_1^\orb)$.
Therefore also the concepts of a character $\chi$ on $\cC$ and its depth are well defined.

\begin{example}
\label{exam-orbicurve}
Let us denote by $\PP^1_{m_1,...,m_s,k \infty}$ an orbicurve 
for which the underlying Riemann surface is $\PP^1$ with 
$k$ points removed and $s$ labeled points with labels $m_1,...,m_s$. 
If $k\geq 1$ (resp. $k\geq 2$) we also use the notation
$\CC_{m_1,...,m_s,(k-1) \infty}$ 
(resp. $\CC^*_{m_1,...,m_s,(k-2) \infty}$)
for 
$\PP^1_{m_1,...,m_s,k \infty}$.
We suppress specification of actual points on $\PP^1$. 
Note that
$$\pi_1^\orb(\PP^1_{m_1,...,m_s,k \infty})=
\begin{cases}
\ZZ_{m_1}(\mu_1)*...*\ZZ_{m_s}(\mu_s)*\ZZ*\dotsmap{k-1} *\ZZ & \text{if\ } k>0\\
{\ZZ_{m_1}(\mu_1)*...*\ZZ_{m_{s}}(\mu_{s}})/{\prod \mu_i} & \text{if\ } k=0\\
\end{cases}
$$
(here $\ZZ_m(\mu)$ denotes a cyclic group of order $m$ with a generator $\mu$). 
Note that a global quotient orbifold of $\PP^1\setminus \{nk \text{\ points}\}$
by the cyclic action of order $n$ on $\PP^1$ that fixes two points, that is, $[x:y]\mapsto [\xi_nx:y]$
(which fixes $[0:1]$ and $[1:0]$) is $\PP^1_{n,n,k \infty}$.

Interesting examples of elliptic global quotients occur for $\PP^1_{2,3,6,k \infty}$, 
$\PP^1_{3,3,3,k  \infty}$, and $\PP^1_{2,4,4,k  \infty}$, which are global orbifolds of 
elliptic curves $E\setminus \{6 k \text{\ points}\}$, $E\setminus \{3 k \text{\ points}\}$, 
and $E\setminus \{4 k \text{\ points}\}$ respectively, see~\cite{mordweil} for a study of 
the relationship between these orbifolds ($k=0$) and the depth of characters of fundamental 
groups of the complements to plane singular curves.
\end{example}

\begin{dfn}
A \emph{marking} on an orbicurve $\cC$ (resp. a quasiprojective variety $\cal X$) is a
non-trivial character of its orbifold fundamental group (resp. its fundamental group) of
positive depth $k$, that is, an element of $\Hom(\pi_1^\orb(\cC),\CC^*)$
(resp. $\Hom(\pi_1(\cal X),\CC^*)$) which is in $V_k(\cC)$ (resp. $V_k(\cX)$).

A \emph{marked orbicurve} is a pair $(\cC,\rho)$, where $\cC$ is an orbicurve and $\rho$
is a marking on $\cC$. Analogously, one defines a \emph{marked quasi-projective manifold}
as a pair $(\cX,\chi)$ consisting of a quasi-projective manifold $\cX$ and a marking on it.

A marked orbicurve $(\cC,\rho)$ is \emph{a global quotient} if $\cC$ is a global quotient of $C$, where
$C$ is a branched cover of $\cC$ associated with the unbranched cover of $\cC\setminus \{P_1,...,P_s\}$ 
corresponding to the kernel of $\pi_1(\cC\setminus \{P_1,...,P_s\})\to \pi_1^\orb (\cC) \rightmap{\rho} \CC^*$. 
In other words, the covering space in Definition~\ref{def-global} corresponds to the kernel of $\rho$.
\end{dfn}

\subsection{Orbifold pencils on quasi-projective manifolds}\label{quasiprojpencils}

\begin{dfn}
Let $\cX$ be a quasi-projective variety, $C$ be a quasi-projective curve, and $\cC$ an orbicurve
which is a global quotient of $C$. A \emph{global quotient orbifold pencil} is a map
$\phi: \cal X \rightarrow \cC$ such that there exists $\Phi: X_G \rightarrow C$ where $X_G$ is a
quasi-projective manifold endowed with an action of the group $G$ making the following diagram commute:
\begin{equation}\label{markedpencildef}
\begin{matrix} X_G & \buildrel \Phi \over \rightarrow & C \cr
              \downarrow   &  &  \downarrow \cr
              \cX  & \buildrel \phi \over \rightarrow & \cC \cr
\end{matrix}  
\end{equation} 
The vertical arrows in \eqref{markedpencildef} are the quotients by the action of $G$.

If, in addition, $(\cX,\chi)$ and $(\cC,\rho)$ are marked, then the global quotient orbifold pencil
$\phi: \cX \rightarrow \cC$ called \emph{marked} if $\chi=\phi^*(\rho)$.
We will refer to the map of pairs $\phi:(\cX,\chi)\to (\cC,\rho)$ as a
\emph{marked global quotient orbifold pencil} on $(\cX,\chi)$ with
 target~$(\cC,\rho)$.
\end{dfn}

\begin{dfn}
\label{def-indep}
Global quotient orbifold pencils $\phi_i:(\cal X,\chi) \rightarrow (\cC,\rho)$, $i=1,...,n$
are called \emph{independent} if the induced maps $\Phi_i: X_G \rightarrow C$
define $\ZZ[G]$-independent morphisms of modules
\begin{equation}
{\Phi_i}_*: H_1(X_G,\ZZ) \rightarrow H_1(C,\ZZ),
\end{equation}
that is, independent elements of the $\ZZ[G]$-module $\Hom_{\ZZ[G]}(H_1(X_G,\ZZ),H_1(C,\ZZ))$.

In addition, if $\bigoplus {\Phi_i}_*: H_1(X_G,\ZZ) \rightarrow H_1(C,\ZZ)^n$ is surjective we say that
the pencils $\phi_i$ are \emph{strongly independent}.
\end{dfn}

\begin{remark}
Note that if either $n=1$ or $H_1(C,\ZZ)=\ZZ[G]$, then independence is equivalent to strong independence
(this is the case for Remark~\ref{rem-main-thm}\ref{rem-non-orbifold} and 
Theorem~\ref{thm-main}\ref{thm-main-part2}).
\end{remark}

\subsection{Structure of characteristic varieties (revisited)}

The following are relevant improvements or additions to Theorem~\ref{arapurath}:

\begin{theorem}[\cite{nonvanishing,acm-charvar-orbifolds}]
The isolated zero-dimensional characters of $V_k(\cD)$ are torsion characters of $\Char(\cD)$.
\end{theorem}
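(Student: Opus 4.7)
The plan is to combine Arapura's structure theorem (Theorem~\ref{arapurath}) with the torsion-translate refinement, and then to observe that an isolated point is forced to be its own translate.

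First, I would recall that by Theorem~\ref{arapurath}, the closure $\overline{V_k(\cD)}$ in $\Char(\pi_1(\cX))$ is a finite union of cosets $\chi_0 \cdot T$, where $T$ is an algebraic subgroup of $\Char(\pi_1(\cX))$ (a subtorus, possibly in a non-identity component) and $\chi_0$ is a character. In particular, each irreducible component $W$ of $V_k(\cD)$ has the form $W = \chi_0 \cdot T$ for some such pair.

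Second, I would invoke the refinement that the translating character $\chi_0$ can be chosen to be of finite order. This is the core statement whose quasi-projective form is established in~\cite{nonvanishing,acm-charvar-orbifolds}, extending the Beauville--Simpson--Campana torsion translate theorem known in the projective case. Once this is available, every irreducible component of $\overline{V_k(\cD)}$ is a torsion translate of a subtorus.

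Third, I would apply this to an isolated character $\chi \in V_k(\cD)$. Being isolated means that the irreducible component of $\overline{V_k(\cD)}$ containing $\chi$ is zero-dimensional, i.e., equal to the singleton $\{\chi\}$. Writing this singleton as $\chi_0 \cdot T$ with $\chi_0$ torsion forces $T=\{1\}$ and hence $\chi=\chi_0$, so $\chi$ itself is a torsion character of $\Char(\cD)$.

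The main obstacle is the second step: the torsion-translate statement is a deep fact, relying on Hodge-theoretic input (mixed Hodge structures on the cohomology of local systems, Simpson's theory) to rule out translations by non-torsion characters. Since the references~\cite{nonvanishing,acm-charvar-orbifolds} already provide this refinement in precisely the generality needed, the argument above reduces to a careful citation rather than a new calculation. A purely elementary approach (e.g., trying to move $\chi$ in a positive-dimensional family via a continuous deformation and using semicontinuity) runs into the difficulty that depth is only upper semicontinuous, so without the Hodge-theoretic refinement one cannot produce a positive-dimensional component through $\chi$ to contradict isolation.
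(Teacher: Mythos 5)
Your proposal is essentially the paper's own treatment: the theorem appears there only as a citation to \cite{nonvanishing,acm-charvar-orbifolds}, and your argument correctly reduces the isolated-point case to the torsion-translate structure theorem established in those references (together with the small observation, which you state and which does hold by semicontinuity of depth, that an isolated point of $V_k$ is a zero-dimensional component of $\Char_k$). The only caveat is that for zero-dimensional components the ``torsion translate'' input is precisely the statement being proven --- Arapura's theorem by itself yields cosets of subgroups, with torsion translates asserted only for positive-dimensional components --- so the irreducible content remains the Hodge-theoretic argument of \cite{nonvanishing}, which you correctly identify rather than replace.
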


In~\cite[Theorem~3.9]{dimca-pencils} (see also~\cite{Dimca-Papadima-Suciu-formality}) there is a 
description of one-dimensional components $\chi f^*H^1(C,\CC^*)\subset \Char_k(\cX)$
mentioned in Theorem~\ref{arapurath} and most importantly, of the order of $\chi$ in terms of multiple
fibers of the rational pencil $f$.

In~\cite{charvar}, an algebraic method is described to detect the irregularity of abelian covers 
of $\PP^2$ ramified along $\cD$. This method is very useful to 
compute \emph{non-coordinate components} of $V_k(\cD)$
independently of a presentation of the fundamental group of the complement $\cX$ of $\cD$.

Theorem~\ref{thm-main} (see~\cite{acl-depth}) has~\cite[Theorem~3.9]{dimca-pencils} as a consequence,
but uses the point of view of orbifold pencils. Using this result also the zero-dimensional components 
can be detected (in particular essential coordinate components) and in some cases characterized 
(see section~\ref{sec-ceva}).

Another improvement of Theorem~\ref{arapurath} was given in~\cite{acm-charvar-orbifolds} were the 
point of view of orbifolds was first introduced as follows:

\begin{theorem}[\cite{acm-charvar-orbifolds}]
\label{thmprin}
Let $\cX$ be a smooth quasi-projective variety. Let $V$ be an irreducible component of $V_k(\cX)$. 
Then one of the two following statements holds:
\begin{enumerate}
\enet{\rm(\arabic{enumi})}
\item\label{thmprin-orb} 
There exists an orbicurve $\cC$, a surjective orbifold morphism $\rho:X\to \cC$ and 
an irreducible component $W$ of $V_k(\pi_1^{\text{\rm orb}}(\cC))$ such that $V=\rho^*(W)$.
\item\label{thmprin-tors} $V$ is an isolated torsion point not of type~\ref{thmprin-orb}.
\end{enumerate}
\end{theorem}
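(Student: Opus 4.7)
The plan is to split the analysis of an irreducible component $V \subset V_k(\cX)$ according to its dimension, and in each case relate $V$ to an orbifold structure on an auxiliary curve. The starting input is Arapura's Theorem~\ref{arapurath}, which when $\dim V > 0$ produces an ordinary pencil $f\colon \cX \to C$ with $C$ an open subset of $\PP^1$, and a torsion character $\chi \in \Char_k(\cX)$ such that $V = \chi \cdot f^* H^1(C,\CC^*)$. The content of case~\ref{thmprin-orb} is then to upgrade $(f,C)$ to an orbifold pencil $(\rho,\cC)$ so that \emph{the whole coset $V$}, including the torsion translate, is realized as a pullback $\rho^*(W)$ from a characteristic stratum of $\cC$, rather than only the subtorus $f^*H^1(C,\CC^*)$.

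The key geometric step is to read off the orbifold structure on $\cC$ from the multiple fibers of $f$. After extending $f$ to a morphism of smooth compactifications $\bar f\colon \bar \cX \to \bar C$, connected fibers over points $P_1,\dots,P_s \in C$ may acquire multiplicities $m_1,\dots,m_s > 1$. Declaring these points to carry orbifold weights $m_i$ yields the orbicurve $\cC$, and the map $f$ lifts to an orbifold morphism $\rho\colon \cX \to \cC$ in the sense of Definition~\ref{dfn-group-orb}, because a meridian around the $i$-th multiple fiber pulls back to an $m_i$-th power in $\pi_1(\cX)$. Consequently $\rho^*\colon \Hom(\pi_1^\orb(\cC),\CC^*) \to \Char(\cX)$ has image strictly larger than $f^*H^1(C,\CC^*)$, and one shows that this image contains the torsion translate $\chi$. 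Identifying $V = \rho^*(W)$ for a suitable irreducible component $W$ of $V_k(\pi_1^\orb(\cC))$ then follows by matching depths via a Leray-type argument for the orbifold pencil, of the sort behind Theorem~\ref{thm-main}\ref{thm-main-part1}.

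When $\dim V = 0$, an isolated component $V = \{\chi\}$ is a torsion character by the nonvanishing theorem of~\cite{nonvanishing}. Either such a $\chi$ can still be exhibited as a pullback from a zero-dimensional stratum of some orbicurve $\cC$ (with $\pi_1^\orb(\cC)$ finite), placing $V$ in case~\ref{thmprin-orb}; or no such factorization exists, and by definition $V$ falls into case~\ref{thmprin-tors}. The main obstacle, and the technical heart of the argument, is the orbifold upgrade in case~\ref{thmprin-orb}: one must show that the multiplicities $m_i$ coming from $\bar f$ are large enough to account for the order of $\chi$ modulo $f^*\Char(C)$, equivalently that the order of $\chi$ restricted to the kernel of $\pi_1(\cX)\twoheadrightarrow \pi_1(C)$ divides $\lcm(m_i)$. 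This amounts to proving that the kernel of $\pi_1(\cX) \twoheadrightarrow \pi_1^\orb(\cC)$ is normally generated by the $m_i$-th powers of the meridians around the multiple fibers, which requires a careful local-to-global monodromy analysis of $f$ in a neighborhood of $\bar f^{-1}(P_i)$ and is where the bulk of the work in~\cite{acm-charvar-orbifolds} is located.
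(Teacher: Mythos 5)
First, a point of reference: this theorem is quoted in the paper from~\cite{acm-charvar-orbifolds} and the paper contains no proof of it, so there is no internal argument to compare yours against; I can only judge your outline on its own terms and against the strategy of the cited work.

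Your roadmap is the right one and matches the known approach: start from Arapura's Theorem~\ref{arapurath} to get $V=\chi\cdot f^*H^1(C,\CC^*)$ when $\dim V>0$, put orbifold weights at the multiple fibers of $\bar f$ to build $\cC$ and the orbifold morphism $\rho$, and dispose of the zero-dimensional case via the torsion statement of~\cite{nonvanishing} plus the (tautological) dichotomy in item~\ref{thmprin-tors}. However, as a proof the proposal has a genuine gap exactly where you locate "the technical heart": nothing in your argument actually establishes either (a) that the translation character $\chi$ (mod $f^*H^1(C,\CC^*)$) is accounted for by the multiplicities of the fibers of $f$, i.e.\ that $\chi$ factors through $\pi_1^{\orb}(\cC)$ --- Arapura's theorem gives you no a priori link between the order of $\chi$ and the existence of multiple fibers, and a priori the pencil could even have none --- or (b) that the depth is preserved, i.e.\ that $W:=(\rho^*)^{-1}(V)$ is an irreducible component of $V_k(\pi_1^{\orb}(\cC))$ for the \emph{same} $k$ and that $V=\rho^*(W)$ exactly rather than $V\supseteq$ or $\subseteq\rho^*(W)$. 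Appealing to "a Leray-type argument of the sort behind Theorem~\ref{thm-main}\ref{thm-main-part1}" does not suffice for (b), since that statement only yields an inequality between depths. Both (a) and (b) are precisely the content of the cited theorem, and your text explicitly defers them to~\cite{acm-charvar-orbifolds}; what you have written is therefore an accurate reduction to the hard steps, not a proof of them.

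Two smaller points. The sentence "a meridian around the $i$-th multiple fiber pulls back to an $m_i$-th power in $\pi_1(\cX)$" has the direction reversed: a meridian $\gamma_{ij}$ around a component of multiplicity $n_{ij}$ of the fiber over $P_i$ is \emph{sent by} $f_*$ to $\mu_i^{n_{ij}}$, with $m_i=\gcd_j n_{ij}$, which is why $f_*$ descends to a surjection onto $\pi_1(C\setminus\{P_i\})/\langle\mu_j^{m_j}\rangle=\pi_1^{\orb}(\cC)$. Relatedly, your reformulation of (a) as "the kernel of $\pi_1(\cX)\twoheadrightarrow\pi_1^{\orb}(\cC)$ is normally generated by the $m_i$-th powers of the meridians" is not quite the statement you need: the relevant fact is that a character lying in the component $V$ is trivial on the image of $\pi_1$ of the generic fiber, and that $\pi_1(\cX)$ modulo the normal closure of that image is $\pi_1^{\orb}(\cC)$; establishing the first of these for the translated characters is where the cohomological work (semisimplicity/Leray for the local system near the multiple fibers) actually happens.
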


One has the following consequences from~\ref{thm-main}\ref{thm-main-part2} that allows us to characterize
certain elements of $V_k(\cD)$:

\begin{cor}
\label{rem-main-thm}
Let $(\cX,\chi)$ be a marked complement of $\cD$. Then possible targets for marked orbifold pencils are
$(\cC,\rho)$ with $\cC=\PP^1_{m_1,...,m_s,k \infty}$ (see Example~\ref{exam-orbicurve}).
Assume that there are $n$ strongly independent marked orbifold pencils with such a fixed target $(\cC,\rho)$.
Then,
\begin{enumerate}
\enet{\rm(\arabic{enumi})}
 \item\label{rem-non-orbifold}
In case $\cC$ has no orbifold points, that is $s=0$, the character $\chi$ 
belongs to a positive dimensional component $V$ of $\Char(\cX)$ containing the trivial character.
In this case, $d(\chi)= \dim V-1=n-2$.
 \item\label{rem-torsion-2}
In case $\chi$ is a character of order two, there is a unique marking on $\cC=\CC_{2,2}$ and $d(\chi)$ 
is the maximal number of strongly independent orbifold pencils with target~$\cC$.
 \item\label{rem-elliptic}
In case $\chi$ has torsion 3,4, or 6, there is a unique marking on $\cC=\PP^1_{3,3,3}$, $\cC=\PP^1_{2,4,4}$,
or $\cC=\PP^1_{2,3,6}$ respectively and $d(\chi)$ is the maximal number of strongly independent orbifold pencils 
with target~$\cC$.
\end{enumerate}
\end{cor}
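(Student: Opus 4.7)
The plan is to deduce the corollary from Theorem~\ref{thm-main}\ref{thm-main-part2} after (a) classifying the admissible targets and (b) computing $d(\rho)$ in each case. Since the equality half of Theorem~\ref{thm-main} requires $\cC$ to be a global quotient orbifold of a one-dimensional algebraic group, and the only connected one-dimensional algebraic groups over $\CC$ are $\CC$, $\CC^*$ and elliptic curves $E$, enumerating the finite group actions (together with orbits of removed points) yields exactly the genus-zero orbicurves $\PP^1_{m_1,\ldots,m_s,k\infty}$ in which $(m_1,\ldots,m_s)$ is empty, of the form $(n,n)$, or one of $(3,3,3)$, $(2,4,4)$, $(2,3,6)$, $(2,2,2,2)$. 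The first step of the proof is this classification, and an observation that only the first four families support markings of the torsion orders listed in~\ref{rem-torsion-2} and~\ref{rem-elliptic}.

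For parts~\ref{rem-torsion-2} and~\ref{rem-elliptic} I would compute the characters of positive depth on each of $\CC_{2,2}$, $\PP^1_{3,3,3}$, $\PP^1_{2,4,4}$, $\PP^1_{2,3,6}$ directly from the presentations of $\pi_1^{\orb}(\cC)$ supplied by Definition~\ref{dfn-group-orb}. In each case the abelianization is finite ($\ZZ/2\oplus\ZZ/2$, $\ZZ/3\oplus\ZZ/3$, $\ZZ/4\oplus\ZZ/2$, $\ZZ/6$ respectively), so every character is torsion. A short group-cohomology calculation, or equivalently the decomposition of $H^1$ of the algebraic-group cover under the Galois action, shows that the only characters with non-zero $H^1$ form exactly the Galois orbit of the character induced by the quotient map from the algebraic group, that their common order is precisely $2$, $3$, $4$, or $6$ as stated, and that each has depth $1$. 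This gives uniqueness of the marking. Theorem~\ref{thm-main}\ref{thm-main-part2} then yields $d(\chi)=n\cdot d(\rho)=n$; since by hypothesis $n$ strongly independent marked pencils exist, and any further independent pencil would force $d(\chi)>n$, one concludes that $d(\chi)$ equals the maximum such $n$.

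For part~\ref{rem-non-orbifold}, $\cC$ is the complement in $\PP^1$ of a finite set of points; any non-trivial character $\rho$ satisfies $d(\rho)=k-2$ where $k$ is the number of punctures, by the Euler-characteristic formula together with the vanishing of $H^0$ and $H^2$ for non-trivial local systems on a non-compact curve. Pull-back via $f$ realises $\chi$ as a point of the positive-dimensional translated subtorus $f^*\Char\pi_1(\cC)\subset\Char\pi_1(\cX)$, which passes through the identity, identifies with the component $V$ via Arapura's Theorem~\ref{arapurath}, and has $\dim V=k-1$ and $d(\chi)=k-2$; translating to the notation of the statement (where $n$ parametrises the punctures, consistent with the remark that $n=1$ or $H_1(C,\ZZ)=\ZZ[G]$ forces independence to coincide with strong independence) gives the claim. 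I expect the main obstacle to be the uniqueness-of-marking computation in the elliptic cases: one must rule out characters outside the Galois orbit of the algebraic-group cover, which I would do by constructing explicit coboundaries for each ``wrong'' character using the presentation $\langle \mu_1,\ldots,\mu_s\mid \mu_i^{m_i}=1,\,\prod\mu_i=1\rangle$ to show $Z^1=B^1$ and thereby $H^1=0$.
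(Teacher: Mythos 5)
Your proposal is correct and follows essentially the same route as the paper, which obtains the corollary directly from Theorem~\ref{thm-main}\ref{thm-main-part2}, cites Theorem~\ref{arapurath} for part~\ref{rem-non-orbifold}, and refers to~\cite{mordweil} for part~\ref{rem-elliptic}. Your explicit computations (the abelianizations and the depth-one markings on $\CC_{2,2}$, $\PP^1_{3,3,3}$, $\PP^1_{2,4,4}$, $\PP^1_{2,3,6}$, and the count $d(\rho)=k-2$ for the punctured $\PP^1$ case) merely fill in details that the paper delegates to those references.
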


Part~\ref{rem-non-orbifold} is a direct consequence of Theorem~\ref{arapurath} and
part~\ref{rem-elliptic} had already appeared in the context of Alexander polynomials in~\cite{mordweil}.

In section~\ref{sec-ceva} we will describe in detail examples of 
Corollary~\ref{rem-main-thm}\ref{rem-torsion-2} for line arrangements.

\subsection{Zariski pairs}

We will give a very brief introduction to Zariski pairs. For more details we refer to~\cite{ji-zariski} 
and the bibliography therein.

\begin{dfn}[\cite{kike}]
Two plane algebraic curves $\cD$ and $\cD'$ form a \emph{Zariski pair} if 
there are homeomorphic tubular neighborhoods of $\cD$ and $\cD'$, but
the pairs $(\PP^2,\cD)$ and $(\PP^2,\cD)$ are not homeomorphic.
\end{dfn}

The first example of a Zariski pair was given by Zariski~\cite{zariski}, who showed that the fundamental 
group of the complement to an irreducible  sextic (a curve of degree six) with six cusps on a conic is isomorphic 
to $\ZZ_2*\ZZ_3$ whereas the fundamental group of any other sextic with six cusps is $\ZZ_6$. 
This paved the way for intensive research aimed to understand the connection between the topology of 
$(\PP^2,\cD)$ and the position of the singularities of~$\cD$ (whether algebraically, geometrically, combinatorially...). 
This research has been often in the direction of a search for finer invariants of $(\PP^2,\cD)$.

Characteristic varieties (described above) and the Alexander polynomials
(i.e. the one variable version of the characteristic varieties), 
twisted polynomials~\cite{twisted}, generalized Alexander 
polynomials~\cite{oka-alexander,mordweil}, dihedral covers of $\cD$ (\cite{hiro}) among many others are 
examples of such invariants.

\begin{dfn}
If the Alexander polynomials $\Delta_\cD(t)$ and $\Delta_{\cD'}(t)$ coincide, then we say
$\cD$ and $\cD'$ form an \emph{Alexander-equivalent Zariski pair}.
\end{dfn}

In section~\ref{sec-zariski-pair} we will use Theorem~\ref{thm-main} to give an alternative proof 
that the curves in~\cite{ArtalCogolludo} 
Alexander-equivalent Zariski pair, without computing the fundamental group.

\section{Examples of characters of depth 3: Fermat Curves}
\label{sec-fermat}
Consider the following family of plane curves:

$$
\array{rcl}
\cF_n&:=&\{f_n:= x_1^n+x_2^n-x_0^n=0\},\\
\cL_1&:=&\{\ell_1:=x_0^n-x_2^n=0\},\\
\cL_2&:=&\{\ell_2:=x_0^n-x_2^n=0\}.
\endarray
$$

We will study the characteristic varieties of the quasi-projective manifolds 
$\cal X_n:=\PP^2\setminus \cD_n$, where $\cD_n:=\cF_n\cup \cL_1\cup \cL_2$, in light of the results given in the 
previous sections, in particular the essential torsion characters will be considered and 
their depth will be exhibited as the number of strictly independent orbifold pencils.

\subsection{Fundamental Group}
Note that $\cD_n$ is nothing but the preimage 
by the Kummer cover $[x_0:x_1:x_2]\mapstomap{\kappa_n} [x_0^n:x_1^n:x_2^n]$ of the following 
arrangement of three lines in general position given by the equation 
$$(x_0-x_1)(x_0-x_2)(x_0-x_1-x_2)=0.$$
Such a map ramifies along $\cB:=\{x_0x_1x_2=0\}$. We will compute the fundamental group of $\cal X_n$ as a quotient of 
the subgroup $K_n$ of $\pi_1(\PP^2\setminus \cL)$ associated with the Kummer cover, where 
\begin{equation}
\label{eq-ceva}
\cL:=\{x_0x_1x_2(x_0-x_2)(x_0-x_1)(x_0-x_1-x_2)=0\}
\end{equation}
is a Ceva arrangement. More precisely, the quotient is obtained as a factor of $K_n$ by the normal 
subgroup generated by the meridians of the ramification locus $\kappa_n^{-1}(\cB)$ in $\cal X_n$.

The fundamental group of the complement to the Ceva arrangement $\cL$ is given by the following 
presentation of~$G$.
\begin{equation}
\label{eq-G}
\langle e_0,...,e_5 : [e_1,e_2]=[e_3,e_5,e_1]=[e_3,e_4]=[e_5,e_2,e_4]=e_4e_3e_5e_2e_1e_0=1 \rangle
\end{equation}
where $e_i$ is a meridian of the component appearing in the $(i+1)$-th place in~\eqref{eq-ceva},
$[\alpha,\beta]$ denotes the commutator $\alpha\beta\alpha^{-1}\beta^{-1}$, and $[\alpha,\beta,\delta]$ denotes 
the triple of commutators $[\alpha\beta\delta,\alpha]$, $[\alpha\beta\delta,\beta]$, and $[\alpha\beta\delta,\delta]$
leading to a triple of relations in~\eqref{eq-G}.

In other to obtain~\eqref{eq-G} one can use the \emph{non-generic} Zariski-Van Kampen method on 
Figure~\ref{fig-ceva} (see~\cite[Section~1.4]{ji-zariski}). The dotted line $\ell$ represents a generic 
line where the meridians $e_0,...,e_5$ are placed (note that the last relation on~\eqref{eq-G} is the relation 
in the fundamental group of~$\ell \setminus (\cL\cap \ell)\approx \PP^1_{6\infty}$). The first two relations 
on~\eqref{eq-G} appear when moving the generic line around $\ell_1$. The third and fourth relations come from 
moving the generic line around~$\ell_4$.

\begin{figure}
\label{fig-ceva}
\vspace*{14pt}
\begin{picture}(0,0)
 \put(13,110){$\ell_1$}
 \put(43,110){$\ell$}
 \put(73,110){$\ell_4$}
 \put(-5,60){$\ell_3$}
 \put(-5,90){$\ell_5$}
 \put(-5,20){$\ell_2$}
\end{picture}
 \includegraphics[scale=.3]{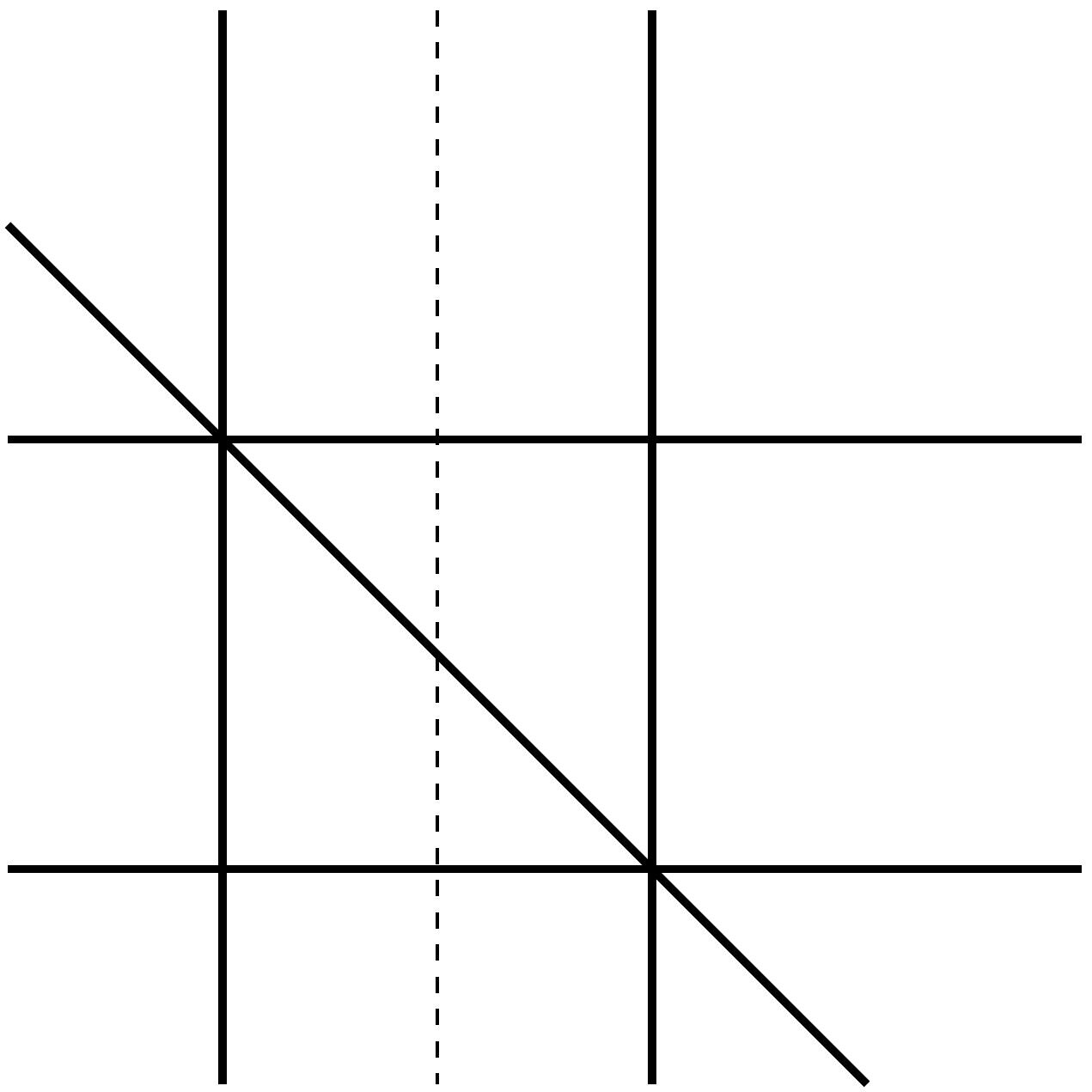}
\caption{Ceva arrangement}
\end{figure}

The fundamental group of the complement to $\cD_n\cup \cB$ is equal to the kernel $K_n$ of the epimorphism
\begin{equation}
\label{eq-epi}
\array{ccc}
G & \rightmap{\alpha} &\ZZ_n \times \ZZ_n \\
e_0 & \mapsto & (1,1)\\
e_1 & \mapsto & (1,0) \\
e_2 & \mapsto & (0,1) \\
e_3 & \mapsto & (0,0) \\
e_4 & \mapsto & (0,0) \\
e_5 & \mapsto & (0,0) \\
\endarray
\end{equation}
since it is the fundamental group of the abelian cover with covering transformations $\ZZ_n\times \ZZ_n$.
Therefore a presentation of the fundamental group of the complement to $\cD_n$ can be obtained 
by taking a factor of $K_n$ by the normal subgroup generated by $e^n_0$, $e^n_1$, and $e^n_2$ (which are the 
meridians to the preimages of the lines $x_0$, $x_1$, and $x_2$ respectively).
Using the Reidemeister-Schreier method (cf.~\cite{karras}) combined with the triviality of $e^n_0$, $e^n_1$, 
and $e^n_2$ one obtains the following presentation for $G_n:=\pi_1(\cX_n)$:
\begin{equation}
\label{eq-rels} 
G_n=\langle \ e_{3,i,j},e_{4,i,j},e_{5,i,j} : 
\begin{array}{cl}
(R1) & e_{3,i+1,j}=e^{-1}_{5,i,j}e_{3,i,j}e_{5,i,j},\\
(R2) & e_{4,i,j+1}=e^{-1}_{5,i,j}e_{4,i,j}e_{5,i,j},\\
(R3) & e_{5,i+1,j}=e^{-1}_{5,i,j}e^{-1}_{3,i,j}e_{5,i,j}e_{3,i,j}e_{5,i,j},\\
(R4) & e_{5,i,j+1}=e^{-1}_{5,i,j}e^{-1}_{4,i,j}e_{5,i,j}e_{4,i,j}e_{5,i,j},\\
(R5) & {[}e_{3,i,j},e_{4,i,j}{]}=1,\\
(R6) & \prod_{k=0}^{n-1}e_{4,k,k}e_{3,k,k}e_{5,k,k}=1
\end{array}
\ \rangle
\end{equation}
where $i,j\in \ZZ_n$ and 
$$e_{k,i,j}:=e_1^ie_2^je_ke_2^{-j}e_1^{-i},\ \ k=3,4,5.$$
As a brief description of the Reidemeister-Schreier method, we recall that the generators of $G_n$
are obtained from a set-theoretical section of $\alpha$ in~\eqref{eq-epi} (in our case $s:\ZZ_n\times \ZZ_n \to G$
is given by $(i,j)\mapsto e_1^ie_2^j$) as follows 
$$
s(i,j)\ e_k\ (\alpha(e_k)s(i,j))^{-1}.
$$
Thus the set $\{e_{k,i,j}\}$ above forms a set of generators of $G_n$. Finally a complete set of relations can be 
obtained by rewriting the relations of $G$ in~\eqref{eq-G} (and their conjugates by $s(i,j)$) in terms of the 
generators of the subgroup $G_n$.

\begin{example}
In order to illustrate the rewriting method we will proceed with the second relation of $G$ in~\eqref{eq-G}.
$$
\array{c}
s(i,j)[e_1,e_2]s(i,j)^{-1}=e_1^ie_2^j(e_3e_4e_3^{-1}e_4^{-1})e_2^{-j}e_1^{-i}=\\
(e_1^ie_2^je_3e_2^{-j}e_1^{-i})\ (e_1^ie_2^je_4e_2^{-j}e_1^{-i})\ (e_1^ie_2^je_3^{-1}e_2^{-j}e_1^{-i})\ (e_1^ie_2^je_4^{-1}e_2^{-j}e_1^{-i})=
[e_{3,i,j},e_{4,i,j}]
\endarray
$$
\end{example}

\subsection{Essential Coordinate Characteristic Varieties}
\label{sec-charvar}
Now we will discuss a presentation of $G_n'/G_n''$ as a module over $G_n/G'_n$, which will be referred to as 
$M_{\cD_n,\ab}$. For details we refer to section~\ref{sec-alex-inv}.
Note that $G_n/G'_n$ is isomorphic to $\ZZ^{2n}$ and is generated by 
the cycles $\gamma_5$, $\gamma_{3,j}$, $\gamma_{4,i}$, ($i,j\in \ZZ_n$) where
$\gamma_5=\ab(e_{5,i,j})$, $\gamma_{3,j}=\ab(e_{3,i,j})$, and $\gamma_{4,i}=\ab(e_{4,i,j})$
satisfying $n\gamma_5+\sum_j \gamma_{3,j} + \sum_i \gamma_{4,i}=0$\footnote{Recall that $\ab$ is 
the morphism of abelianization}.
Let $t_{5}$ (resp. $t_{3,j}$, $t_{4,i}$) be the generators of $G_n/G_n'$ viewed as a 
multiplicative group corresponding to the additive generators $\gamma_5$ 
(resp. $\gamma_{3,j}$, $\gamma_{4,i}$). The characteristic varieties of $G_n$ are contained~in 
$$
(\CC^*)^{2n} =
\spec \CC[t_5^{\pm 1},t_{3,i}^{\pm 1},t_{4,j}^{\pm 1}]/(t_5^n\prod_j t_{3,j} \prod_i t_{4,i}-1).
$$
As generators of $M_{\cD_n,\ab}$ we select commutators of the generators of $G_n$ as given in~\eqref{eq-G}.
In order to do so, note that using relations $(R1)-(R4)$ in~\eqref{eq-rels}, a presentation of $G_n$ can be given in 
terms the $2n+1$ generators $e_5:=e_{5,0,0}$, $e_{3,j}:=e_{3,0,j}$, and $e_{4,i}:=e_{4,i,0}$. Hence,
by Proposition~\ref{prop-M1}, $M_{\cD_n,\ab}$ is generated by the ${2n+1}\choose{2}$ commutators 
\begin{equation}
\label{eq-comm}
\{[e_{5},e_{3,j}],[e_{5},e_{4,i}],[e_{4,i},e_{3,j}],[e_{4,i_1},e_{4,i_2}],[e_{3,j_1},e_{3,j_2}]\}_{i_*,j_*\in \ZZ_n},
\end{equation}
as a $\CC[\ZZ[t_1^{\pm 1},...,t_4^{\pm 1},t_5^{\pm 1}]]$-module. Also, according to Proposition~\ref{prop-M2},
a complete set of relations of $M_{\cD_n,\ab}$ is given by rewriting the following relations 
\begin{equation}
\label{eq-relsM} 
\array{lllll}
(M1) & {[}\prod_{i=0}^{n-1} e_{5,i,j},e_{3,j}] & = & 0&\\
(M2) & {[}\prod_{i=0}^{n-1} e_{5,i,j},e_{4,i}] & = & 0&\\
(M3) & {[}e_{5,i,j+1},e_{3,i,j+1}e_{5,i,j+1}]e_{5,i,j+1}^{-1}&=&
{[}e_{5,i+1,j},e_{4,i+1,j}e_{5,i+1,j}]e_{5,i+1,j}^{-1}&\\
(M4) & \prod_{i=0}^{n-1} e_{4,i,i}e_{3,i,i}e_{5,i,i} & = &0&\\
\endarray
\end{equation}
in terms of commutators~\eqref{eq-comm} and by the Jacobian relations:
\begin{equation}
\label{eq-jacobi} 
\array{lll}
(t_{3,j}-1)[e_{5},e_{4,i}]+(t_{4,i}-1)[e_{3,j},e_{5}]+(t_{5}-1)[e_{4,i},e_{3,j}] & = & 0,\\
(t_{3,j_1}-1)[e_{5},e_{3,j_2}]+(t_{3,j_2}-1)[e_{3,j_1},e_{5}]+(t_{5}-1)[e_{3,j_2},e_{3,j_1}] & = & 0,\\
(t_{4,i_1}-1)[e_{5},e_{4,i_2}]+(t_{4,i_2}-1)[e_{4,i_1},e_{5}]+(t_{5}-1)[e_{4,i_2},e_{4,i_1}] & = & 0,\\
... & \\
\endarray
\end{equation}
In order to rewrite relations $(M1)-(M4)$ one needs to use~\eqref{eq-comm} repeatedly. In what follows, we
will concentrate on the characters of $\Char(\cD_n)$ contained in the coordinate axes $t_{3,j}=t_{4,i}=1$.
Computations for the general case can also be performed, but are more technical and tedious.

Since we are assuming $t_{3,j}=t_{4,i}=1$, and $t_5\neq 1$, relations in~\eqref{eq-jacobi} 
become $[e_{4,i},e_{3,j}]=[e_{3,j_2},e_{3,j_1}]=[e_{4,i_2},e_{4,i_1}]=0$ and hence $(R5)$ in~\eqref{eq-rels} become
redundant. A straightforward computation gives the following matrix where each line is a relation from~\eqref{eq-relsM} 
written in terms of the commutators $\{[e_5,e_{3,i}],[e_5,e_{4,i}]\}_{i\in \ZZ_n}$.
$$
A_n:=
{\tiny{
\left[
\begin{array}{cccccc|cccccc}
\phi_n & 0 & 0 & ... & 0 & 0 & 0 & 0 & 0 & ... & 0 & 0\\
0 & \phi_n & 0 & ... & 0 & 0 & 0 & 0 & 0 & ... & 0 & 0\\
&&& \vdots &&&&&& \vdots &&\\
0 & 0 & 0 & ... & 0 & \phi_n & 0 & 0 & 0 & ... & 0 & 0\\
\hline
\hline
0 & 0 & 0 & ... & 0 & 0 & \phi_n & 0 & 0 & ... & 0 & 0\\
0 & 0 & 0 & ... & 0 & 0 & 0 & \phi_n & 0 & ... & 0 & 0\\
&&& \vdots &&&&&& \vdots &&\\
0 & 0 & 0 & ... & 0 & 0 & 0 & 0 & 0 & ... & 0 & \phi_n\\
\hline
\hline
1 & -1 & 0 & ... & 0 & 0 & 1 & -1 & 0 & ... & 0 & 0\\
1 & -1 & 0 & ... & 0 & 0 & 0 & t & -t & ... & 0 & 0\\
&&& \vdots &&&&&& \vdots &&\\
1 & -1 & 0 & ... & 0 & 0 & 0 & 0 & 0 & ... & t^{n-2} & -t^{n-2}\\
1 & -1 & 0 & ... & 0 & 0 & -t^{n-1} & 0 & 0 & ... & 0 & t^{n-1}\\
\hline
&&& \vdots &&&&&& \vdots &&\\
\hline
0 & 0 & 0 & ... & t^{n-2} & -t^{n-2} & 1 & -1 & 0 & ... & 0 & 0\\
0 & 0 & 0 & ... & t^{n-2} & -t^{n-2} & 0 & t & -t & ... & 0 & 0\\
&&& \vdots &&&&&& \vdots &&\\
0 & 0 & 0 & ... & t^{n-2} & -t^{n-2} & 0 & 0 & 0 & ... & t^{n-2} & -t^{n-2}\\
0 & 0 & 0 & ... & t^{n-2} & -t^{n-2} & -t^{n-1} & 0 & 0 & ... & 0 & t^{n-1}\\
\hline
\hline
1 & t & t^2 & ... & t^{n-2} & t^{n-1} & 1 & t & t^2 & ... & t^{n-2} & t^{n-1}\\
\end{array}
\right]
}}
$$
More precisely, the first (resp. second) block of $A_n$ corresponds to the $n$ relations given in 
$(M1)$ (resp. $(M2)$) of~\eqref{eq-relsM}, $\phi_n:=\frac{t^n-1}{t-1}$, and $t=t_5$. The following 
$n$~blocks of $A_n$ (between double horizontal lines) correspond to the $n^2$ relations given 
in $(M3)$ of~\eqref{eq-relsM}. Note that the last row of each of these blocks is a consequence of
the remaining $n-1$ rows. The last block corresponds to the relation given in $(M4)$ of~\eqref{eq-relsM}. 

\begin{example}
In order to illustrate $A_n$ we will show how to rewrite the first relation for $n=3$, that is,
$$[e_{5,0,j}e_{5,1,j}e_{5,2,j},e_{3,j}]\ \eqmap{M}\ \phi_n [e_5,e_{3,j}].$$
Using~\eqref{eq-comm-M} one has
$$
[e_{5,0,j}e_{5,1,j}e_{5,2,j},e_{3,j}]\ \eqmap{M}\ 
[e_{5,0,j},e_{3,j}]+t[e_{5,1,j},e_{3,j}]+t^2[e_{5,0,j},e_{3,j}].
$$
Therefore, it is enough to show that $[e_{5,i,j},e_{3,j}]=[e_5,e_{3,j}]$.
Note that $e_{5,i,j}$ is a conjugate of $e_5$ (using $(R4)$ and $(R4)$), hence, by~\eqref{eq-comm-M2} one obtains 
$[e_{5,i,j},e_{3,j}]=[e_5,e_{3,j}]$ (since we are assuming $t_{3,j}=1$).
\end{example}

Also note that, performing row operations, one can obtain the following equivalent matrix
$$
B_n:=
{\tiny{
\left[
\begin{array}{cccccc|cccccc}
\phi_n & 0 & 0 & ... & 0 & 0 & 0 & 0 & 0 & ... & 0 & 0\\
0 & \phi_n & 0 & ... & 0 & 0 & 0 & 0 & 0 & ... & 0 & 0\\
&&& \vdots &&&&&& \vdots &&\\
0 & 0 & 0 & ... & 0 & \phi_n & 0 & 0 & 0 & ... & 0 & 0\\
\hline
0 & 0 & 0 & ... & 0 & 0 & \phi_n & 0 & 0 & ... & 0 & 0\\
0 & 0 & 0 & ... & 0 & 0 & 0 & \phi_n & 0 & ... & 0 & 0\\
&&& \vdots &&&&&& \vdots &&\\
0 & 0 & 0 & ... & 0 & 0 & 0 & 0 & 0 & ... & 0 & \phi_n\\
\hline
1 & -1 & 0 & ... & 0 & 0 & 1 & -1 & 0 & ... & 0 & 0\\
1 & -1 & 0 & ... & 0 & 0 & 0 & t & -t & ... & 0 & 0\\
&&& \vdots &&&&&& \vdots &&\\
1 & -1 & 0 & ... & 0 & 0 & 0 & 0 & 0 & ... & t^{n-2} & -t^{n-2}\\
0 & t & -t & ... & 0 & 0 & 0 & 0 & 0 & ... & t^{n-2} & -t^{n-2}\\
&&& \vdots &&&&&& \vdots &&\\
0 & 0 & 0 & ... & t^{n-2} & -t^{n-2} & 0 & 0 & 0 & ... & t^{n-2} & -t^{n-2}\\
\hline
1 & t & t^2 & ... & t^{n-2} & t^{n-1} & 1 & t & t^2 & ... & t^{n-2} & t^{n-1}\\
\end{array}
\right]
}}
$$
Finally, one can write the presentation matrix $B_n$ in terms of the basis 
$$\{[e_5,e_{3,i}] - [e_5,e_{3,i+1}], [e_5,e_{3,n-1}] , [e_5,e_{4,i}] - [e_5,e_{4,i+1}], [e_5,e_{4,n-1}] \}_{i=0,...,n-2}$$
resulting in
$$
{\tiny{
\left[
\begin{array}{cccccc|cccccc}
\phi_n & \phi_n & 0 & ... & 0 & 0 & 0 & 0 & 0 & ... & 0 & 0\\
0 & \phi_n & \phi_n & ... & 0 & 0 & 0 & 0 & 0 & ... & 0 & 0\\
&&& \vdots &&&&&& \vdots &&\\
0 & 0 & 0 & ... & \phi_n & \phi_n & 0 & 0 & 0 & ... & 0 & 0\\
0 & 0 & 0 & ... & 0 & \phi_n & 0 & 0 & 0 & ... & 0 & 0\\
\hline
0 & 0 & 0 & ... & 0 & 0 & \phi_n & \phi_n  & 0 & ... & 0 & 0\\
0 & 0 & 0 & ... & 0 & 0 & 0 & \phi_n & \phi_n  & ... & 0 & 0\\
&&& \vdots &&&&&& \vdots &&\\
0 & 0 & 0 & ... & 0 & 0 & 0 & 0 & 0 & ... & \phi_n & \phi_n\\
0 & 0 & 0 & ... & 0 & 0 & 0 & 0 & 0 & ... & 0 & \phi_n\\
\hline
1 & 0 & 0 & ... & 0 & 0 & 1 & 0 & 0 & ... & 0 & 0\\
1 & 0 & 0 & ... & 0 & 0 & 0 & t & 0 & ... & 0 & 0\\
1 & 0 & 0 & ... & 0 & 0 & 0 & 0 & t^2 & ... & 0 & 0\\
&&& \vdots &&&&&& \vdots &&\\
1 & 0 & 0 & ... & 0 & 0 & 0 & 0 & 0 & ... & t^{n-2} & 0\\
0 & t & 0 & ... & 0 & 0 & 0 & 0 & 0 & ... & t^{n-2} & 0\\
0 & 0 & t^2 & ... & 0 & 0 & 0 & 0 & 0 & ... & t^{n-2} & 0\\
&&& \vdots &&&&&& \vdots &&\\
0 & 0 & 0 & ... & t^{n-2} & 0 & 0 & 0 & 0 & ... & t^{n-2} & 0\\
\hline
\phi_1 & \phi_2 & \phi_3 & ... & \phi_{n-1} & \phi_n & \phi_1 & \phi_2 & \phi_3 & ... & \phi_{n-1} & \phi_n\\
\end{array}
\right]
}}
$$
One can use the units in the third block to eliminate columns, leaving the equivalent matrix
$$
{\tiny{
\left[
\begin{array}{cccccc|cccccc}
\phi_n & 0 & 0 & ... & 0 & 0 & 0 & 0 & 0 & ... & 0 & 0\\
0 & 0 & 0 & ... & 0 & 0 & 0 & 0 & 0 & ... & -\phi_n & 0\\
0 & 0 & 0 & ... & 0 & \phi_n & 0 & 0 & 0 & ... & 0 & 0\\
\hline
0 & 0 & 0 & ... & 0 & 0 & 0 & 0 & 0 & ... & 0 & \phi_n\\
\hline
1 & 0 & 0 & ... & 0 & 0 & 1 & 0 & 0 & ... & 0 & 0\\
1 & 0 & 0 & ... & 0 & 0 & 0 & 0 & 0 & ... & t^{n-2} & 0\\
\end{array}
\right]
}}
\cong
{\tiny{
\left[
\begin{array}{c|ccc}
0 & -\phi_n & 0 & 0\\
0 & 0 & -\phi_n & 0\\
\phi_n & 0 & 0 & 0\\
\hline
0 & 0 & 0 & \phi_n\\
\hline
0 & -1 &  t^{n-2} & 0\\
\end{array}
\right].
}}
$$
Finally, a last combination of row operations using the units to eliminate columns results in
$$
{\tiny{
\left[
\begin{array}{c|ccc}
0 & 0 & -\phi_n t^{n-2} & 0\\
0 & 0 & -\phi_n & 0\\
\phi_n & 0 & 0 & 0\\
\hline
0 & 0 & 0 & \phi_n\\
\hline
0 & -1 & t^{n-2} & 0\\
\end{array}
\right]
}}
\cong
{\tiny{
\left[
\begin{array}{c|cc}
0 & -\phi_n t^{n-2} & 0\\
0 & -\phi_n & 0\\
\phi_n & 0 & 0\\
\hline
0 & 0 & \phi_n
\end{array}
\right]
}}
\cong
{\tiny{
\left[
\begin{array}{c|cc}
0 & \phi_n & 0\\
\phi_n & 0 & 0\\
\hline
0 & 0 & \phi_n
\end{array}
\right].
}}
$$
Hence the $n-1$ non-trivial torsion characters $\chi_n^i:=(\xi_n^i,1,...,1)$, $i=1,...,n$ 
belong to $\Char(\cD_n)$ and have depth 3, that is, $\chi_n^i\in V_3(\cD_n)$.

\subsection{Marked Orbifold Pencils}
By Theorem~\ref{thm-main}\ref{thm-main-part1} there are at most three strongly independent marked 
orbifold pencils from the marked variety $(\cX_n,\chi_n)$. Our purpose is to explicitly show such
three strongly independent pencils. Note that 
\begin{equation}
\label{eq-orb-maps}
\array{cccl}
j_k: & \PP^2\setminus (\cF_n\cup \cL_k) & \to & \CC^*_{n}
=\PP^1_{(n,[1:0]),(\infty,[0:1]),(\infty,[1:1])} \\
& [x:y:z] & \mapsto & [f_n:x_k^n],
\endarray
\end{equation}
for $j=1,2$ are two natural orbifold pencils coming from the $n$-ordinary points of $\cF_n$
coming form the triple points of the Ceva arrangement $\cL$
which are in $\cB$.
Consider the marked orbicurve $(\CC_{n,n},\rho_n)$, where $\rho_n=(\xi_n,1)$, the first coordinate corresponds
to the image of a meridian $\mu_1$ around $[0:1]\in \PP^1_{(n,[0:1]),(n,[1:0]),(\infty,[1:1])}$ and the 
second coordinate corresponds to the image of a meridian $\mu_2$ around $[1:0]$ (note that 
$\pi_1^\orb(\CC_{n,n})=\ZZ_n(\mu_1) * \ZZ_n(\mu_2)$).

In order to obtain marked orbifold pencils with target $(\CC_{n,n},\rho_n)$ one simply 
considers the following composition, where $i_k$ and $j_k$ are inclusions
$$\psi_k:\cX_n \injmap{i_k} \PP^2\setminus (\cF_n \cup \cL_k) \rightmap{j_k} 
\PP^1_{(n,[1:0]),(\infty,[0:1]),(\infty,[1:1])} \injmap{i} \PP^1_{(n,[0:1]),(n,[1:0]),(\infty,[1:1])}.$$
Such pencils are clearly marked global quotient orbifold pencils from $(\cX_n,\chi_n)$ to $(\CC_{n,n},\rho_n)$,
where $(\CC_{n,n},\rho_n)$ is the marked quotient of $C_n:=\PP^1\setminus \{[\xi_n^j:1]\}_{j\in \ZZ_n}$ by the cyclic 
action $[x:y]\mapsto [\xi_nx:y]$. The resulting commutative diagrams are given by
\begin{equation}
\label{eq-diag1}
\array{rcl}
X_n & \rightmap{\Psi_k} & C_n\\
{[}x_0:x_1:x_2:w] & \mapsto & [w:x_k] \\
\downmap{\pi} & & \downarrow\\
\cX_n & \rightmap{\psi_k} & \CC_{n,n}\\
{[}x_0:x_1:x_2] & \mapsto & [f_n:x_k^n],
\endarray
\end{equation}
$k=1,2$, where $X_n$ is the smooth open surface given by 
$\{[x_0:x_1:x_2:w]\in \PP^3 \mid w^n=f_n\} \setminus \{f_n\ell_1\ell_2=0\}$.

Note that there is a third quasitoric relation involving all components of $\cD_n$, namely,
\begin{equation}
\label{eq-3}
f_nx_0^n + \ell_1\ell_2 = x_1^nx_2^n
\end{equation}
and hence a global quotient marked orbifold map
\begin{equation}
\label{eq-orb-maps3}
\array{cccl}
\psi_3: & \cX_n & \to & \CC_{n,n}
=\PP^1_{(n,[0:1]),(n,[1:0]),(\infty,[1:1])} \\
& [x:y:z] & \mapsto & [-f_nx_0^n:x_1^nx_2^n],
\endarray
\end{equation}
which gives rise to the following diagram
\begin{equation}
\label{eq-diag2}
\array{rcl}
X_n & \rightmap{\Psi_3} & C_n\\
{[}x_0:x_1:x_2:w] & \mapsto & [-wx_0:x_1x_2] \\
\downmap{\pi_n} & & \downarrow\\
\cX_n & \rightmap{\psi_k} & \CC_{n,n}\\
{[}x_0:x_1:x_2] & \mapsto & [-f_nx_0^n:x_1^nx_2^n].
\endarray
\end{equation}
Note that, when extending $\pi_n$ to a branched covering, the preimage of each line $\{\ell_{k,i}=0\}\subset \cL_k$ 
($k=1,2$) in $\cD_n$ ($\ell_{k,i}:=x_0-\xi_n^ix_k$) decomposes into $n$ irreducible components $\bigcup_{j\in \ZZ_n} \ell_{k,i,j}$ 
and thus allows to consider $\gamma_{k,i,j}$ ($k=1,2$, $i,j\in\ZZ_n$) meridians around each component of $\{\ell_{k,i,j}=0\}$. 
Also consider a meridian $\gamma_0$ around the preimage of $\cF_n$.

\begin{theorem}
The marked orbifold pencils $\psi_1$, $\psi_2$, and $\psi_3$ described above are strongly independent and hence they
form a maximal set of strongly independent pencils.
\end{theorem}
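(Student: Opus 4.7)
Plan. The plan is to combine a maximality bound coming from Theorem~\ref{thm-main}\ref{thm-main-part2} with an explicit computation of the induced maps on first homology. The target $(\CC_{n,n},\rho_n)$ is a global quotient orbicurve of the one-dimensional algebraic group $\CC^*$ by the cyclic action $[x{:}y]\mapsto[\xi_n x{:}y]$, and the marking $\rho_n$ has depth $1$. Consequently, Theorem~\ref{thm-main}\ref{thm-main-part2} together with the computation $d(\chi_n^i)=3$ from subsection~\ref{sec-charvar} forces the maximal number of $\ZZ[\ZZ_n]$-strongly independent marked pencils from $(\cX_n,\chi_n^i)$ to $(\CC_{n,n},\rho_n)$ to be exactly $d(\chi_n^i)/d(\rho_n)=3$. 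It therefore suffices to prove that the three pencils $\psi_1,\psi_2,\psi_3$ are themselves strongly independent; maximality then follows automatically.

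To verify strong independence in the sense of Definition~\ref{def-indep}, I would first identify $H_1(C_n,\ZZ)$ as a $\ZZ[\ZZ_n]$-module. Since $C_n=\PP^1$ minus a free $\ZZ_n$-orbit of $n$ points, the module $H_1(C_n,\ZZ)$ is the augmentation ideal $I\subset\ZZ[\ZZ_n]$, generated as a $\ZZ[\ZZ_n]$-module by the class of a single meridian around any one puncture. Strong independence of $\psi_1,\psi_2,\psi_3$ is then equivalent to the surjectivity of the combined $\ZZ[\ZZ_n]$-module map $\bigoplus_{k=1}^3{\Psi_k}_*\colon H_1(X_n,\ZZ)\to I^{\oplus 3}$, where $\Psi_k$ are the equivariant lifts from~\eqref{eq-diag1}--\eqref{eq-diag2}.

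For the geometric step, I would use the three quasitoric relations $f_n+\ell_2=x_1^n$, $f_n+\ell_1=x_2^n$, and~\eqref{eq-3}: the fibre of $\psi_k$ over the unique deleted point $[1{:}1]\in\CC_{n,n}$ is, respectively, $\cL_1$, $\cL_2$, and $\cF_n\cup\{x_0=0\}$. Each splits on the cover $X_n\to\cX_n$ into components corresponding to the $n$ punctures of $C_n$. Taking $\eta_k\in H_1(X_n,\ZZ)$ to be the class of a meridian in $X_n$ around one such component of the preimage of $\psi_k^{-1}([1{:}1])$, the image ${\Psi_k}_*(\eta_k)$ is by construction a meridian of one puncture of $C_n$, hence a $\ZZ[\ZZ_n]$-generator of $I$. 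For $j\neq k$ the curve supporting $\eta_k$ lies in a generic (non-special) fibre of $\psi_j$, so ${\Psi_j}_*(\eta_k)$ is controlled and can, after a suitable choice of representative component, be arranged to vanish. The resulting $3\times 3$ matrix of images in $I$ is then diagonal with $\ZZ[\ZZ_n]$-generators on the diagonal, so $\bigoplus_k{\Psi_k}_*$ is surjective.

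The main obstacle will be the bookkeeping needed to upgrade meridians in $\cX_n$ to genuine classes in $H_1(X_n,\ZZ)$ on the cover, and in particular to handle the pencil $\psi_3$: its fibre $\cF_n\cup\{x_0=0\}$ includes the line $\{x_0=0\}$, whose meridian is trivial in $\pi_1(\cX_n)$ but typically non-trivial on the cyclic cover $X_n$, where the preimage splits into components whose meridians must be tracked. One also has to exclude hidden $\ZZ[\ZZ_n]$-relations among the $\eta_k$ coming from the Jacobian relations~\eqref{eq-jacobi} and relations $(M1){-}(M4)$ of subsection~\ref{sec-charvar}; this is done by a direct check using the presentation analogous to $B_n$ already obtained. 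Once the triangular/diagonal structure predicted by the geometric picture is confirmed, strong independence of $\psi_1,\psi_2,\psi_3$ follows, and together with the bound from Theorem~\ref{thm-main}\ref{thm-main-part2} the claim is proved.
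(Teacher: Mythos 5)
Your high-level architecture matches the paper's: the upper bound of three comes from Theorem~\ref{thm-main} (though note that for maximality you only need the inequality of part~\ref{thm-main-part1}, which gives $3=d(\chi_n^i)\ge n\, d(\rho_n)=n$; part~\ref{thm-main-part2} requires $\cC$ to be a global quotient of a one-dimensional algebraic group, and $C_n$ is $\PP^1$ minus $n$ points, so its applicability here is not clear and, fortunately, not needed). You also correctly reduce the problem to the surjectivity of $\bigoplus_k{\Psi_k}_*$ onto $H_1(C_n,\ZZ)^3$ with $H_1(C_n,\ZZ)\cong\ZZ[t]/(1+t+\dots+t^{n-1})$, and the paper likewise proceeds by computing the images of the meridian generators of $H_1(X_n;\ZZ)$.

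However, the concrete computation you propose would fail, for two related reasons. First, the fibre of $\psi_3$ over the deleted point of $\CC_{n,n}$ is \emph{not} $\cF_n\cup\{x_0=0\}$: by the quasitoric relation~\eqref{eq-3} it is $\cL_1\cup\cL_2$, while $\cF_n\cup\{x_0=0\}$ sits over an \emph{orbifold} point of order $n$. Since $\{x_0=0\}$ is not a component of $\cD_n$, its preimage is not removed from $X_n$ and carries no nontrivial meridian class; and the meridian $\gamma_0$ over $\cF_n$ maps to $0$ under all three $\Psi_\e$. So your class $\eta_3$ is zero. Second, and consequently, the diagonalization strategy is unavailable in principle: because the deleted fibre of $\psi_3$ is exactly $\cL_1\cup\cL_2$, \emph{every} generator of $H_1(X_n;\ZZ)$ with nonzero image under $\Psi_3$ also has nonzero image under $\Psi_1$ or $\Psi_2$ (the paper's~\eqref{eq-gammas}: $\gamma_{k,i,j}\mapsto\xi_n^j$ under $\Psi_k$ and $\xi_n^{i+j}$ under $\Psi_3$, with no family supported only on the third pencil). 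The actual proof must therefore exploit the $\ZZ[\ZZ_n]$-module structure of the non-diagonal matrix of images — specifically the twist by $\xi_n^i$ as $i$ runs over the $n$ lines of $\cL_k$, which makes the vectors $(1,0,\xi_n^i)$ and $(0,1,\xi_n^i)$, $i\in\ZZ_n$, generate the target — rather than any choice of representatives killing the off-diagonal entries. As written, your argument establishes surjectivity only for $\Psi_1\oplus\Psi_2$ and leaves the third factor unaccounted for.
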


\begin{proof}
Consider $\Psi_{\e,*}:H_1(X_n;\ZZ) \to H_1(C_n;\ZZ)=\ZZ[\xi_n]$, $\e=1,2,3$ the three equivariant morphisms described above.
Using the commutative diagrams~\eqref{eq-diag1} and~\eqref{eq-diag2} one can easily see that
\begin{equation}
\label{eq-gammas}
\Psi_{\e,*} (\gamma_{k,i,j}) = \begin{cases} \xi_n^j & \text{ if\ } \e=k\in \{1,2\} \\
 \xi_n^{i+j} & \text{ if\ } k=3 \\ 0 & \text{otherwise} \end{cases}
\end{equation}
and
$$
\Psi_{\e,*} (\gamma_{0}) = 0
$$
and therefore $\Psi_{\e,*}$ are surjective $\ZZ[\xi_n]$-module morphisms.
Also note that $[\gamma_{k,i,j}]=\mu_n^j[\gamma_{k,i,0}] \in H_1(X_n;\ZZ)$. Consequently according to~\eqref{eq-gammas}
one has 
$$
\left(\Psi_{1,*}\oplus \Psi_{2,*}\oplus \Psi_{3,*}\right) (\gamma_{k,i,0})=
\begin{cases}
(1,0,\xi_n^i) & \text{ if\ } k=1 \\
(0,1,\xi_n^i) & \text{ if\ } k=2 \\
\end{cases}
$$
which implies that $\Psi_{1,*}\oplus \Psi_{2,*}\oplus \Psi_{3,*}$ is surjective.
After the discussion of section~\ref{sec-charvar}, since the depth of $\xi_n^i$ is three, the set of 
strongly independent pencils is indeed maximal.
\end{proof}

\section{Order Two Characters: \SC}
\label{sec-ceva}
From Theorem~\ref{thm-main}\ref{thm-main-part2}, for any order two character $\chi$ of depth $k$ in the characteristic
variety of the complement of a curve there exist $k$ independent pencils associated with $\chi$ whose target is a global 
quotient orbifold of type $\CC_{2,2}$. 

Interesting examples for $k>1$ of this scenario are the {\SC} arrangements $\Ceva(2,s)$, 
$s=1,2,3$ (or \emph{erweiterte Ceva} cf.~\cite[Section~2.3.J, pg.~81]{geraden}). Consider the following set of lines:
\begin{equation}
\array{ccc}
\array{c}
\ell_1:=x\\
\ell_2:=y\\
\ell_3:=z 
\endarray &
\array{c}
\ell_4:=(y-z)\\
\ell_5:=(x-z)\\
\ell_6:=(x-y)
\endarray &
\array{l}
\ell_7:=(x-y-z)\\
\ell_8:=(y-z-x)\\
\ell_9:=(z-x-y).
\endarray
\endarray
\end{equation}
The curve $\cC_6:=\left\{ \prod_{i=1}^6\ell_i=0\right\}$ is a realization of the Ceva arrangement $\Ceva(2)$ (a.k.a. 
braid arrangement or $B_3$-reflection arrangement). Note that this realization is different from the one used 
in section~\ref{sec-fermat}. The curve $\cC_7:=\left\{ \prod_{i=1}^7\ell_i=0 \right\}$ is the \SC\ arrangement $\Ceva(2,1)$ 
(a.k.a. a realization of the non-Fano plane). The curve $\cC_8:=\left\{ \prod_{i=1}^8\ell_i=0\right\}$ is the 
\SC\ arrangement $\Ceva(2,2)$ (a.k.a. a deleted $B_3$-arrangement). Finally, 
$\cC_9:=\left\{\prod_{i=1}^9\ell_i=0\right\}$ is the \SC\ arrangement $\Ceva(2,3)$.

The characteristic varieties of such arrangements of lines are well known 
(c.f~\cite{suciu-enumerative,suciu-translated,dimca-pencils}).
Such computations are done via a presentation of the fundamental group and using Fox derivatives.
In most cases (except for the simplest ones) the need of computer support is basically unavoidable.
In~\cite[Example~3.11]{dimca-pencils} there is an alternative calculation of the positive dimensional components 
of depth~1 via pencils.

Here we will give an interpretation via orbifold pencils of the characters of depth~2, which will account for the 
appearance of these components of the characteristic varieties independently of computation of the fundamental group.

\subsection{Ceva and \SC\ Pencils}
Note that $x(y-z)-y(x-z)+z(x-y)=0$ and hence 
$$
\array{cccc}
f_C: & \PP^2 & \to & \PP^1\\
& [x:y:z] & \mapsto & [\ell_1\ell_4:\ell_2\ell_5]
\endarray
$$
is a pencil of conics such that $(f_C^*([0:1])=\ell_1\ell_4,f_C^{*}([1:0])=\ell_2\ell_5,f_C^{-1}([1:1])=\ell_3\ell_6)$
(we will refer to it as the \emph{Ceva pencil}). Analogously 
$$x(y-z)(x-y-z)^2-y(x-z)(y-z-x)^2+z(x-y)(z-x-y)^2=0$$ 
and hence
$$
\array{cccc}
f_{SC}: & \PP^2 & \to & \PP^1\\
& [x:y:z] & \mapsto & [\ell_1\ell_4\ell_7^2:\ell_2\ell_5\ell_8^2]
\endarray
$$
is a pencil of quartics such that 
$(f_{SC}^{*}([0:1])=\ell_1\ell_4\ell_7^2,f_{SC}^{*}([1:0])=\ell_2\ell_5\ell_8^2,f_{SC}^{*}([1:1])=\ell_3\ell_6\ell_9^2)$ 
(we will refer to it as the \emph{\SC\ pencil}). 

\subsection{Characteristic Varieties of $\cC_i$, $i=6,7,8,9$}
We include the structure of the characteristic varieties of these curves for the reader's convenience. 
As reference for such computations 
see~\cite{suciu-enumerative,suciu-translated,falk-arrangements,cohen-suciu-characteristic,charvar,libgober-yuzvinsky-local}.

We will denote by $\cX_*$ the complement of the curve $\cC_*$ in $\PP^2$, for $*=6,7,8,9$.

\subsubsection{Arrangement $\cC_6$.}
The characteristic variety $\Char(\cC_6)$ consists of four non-essential coordinate components associated with the four 
triple points of $\cC_6$ (see Remark~\ref{rem-main-thm}\ref{rem-non-orbifold})\footnote{a.k.a. local components} 
and one essential component of dimension~2 and depth~1 given by the Ceva pencil 
$$\psi_6:=f_C|_{\cX_6}: \cX_6 \to \PP^1\setminus \{[0:1],[1:0],[1:1]\}.$$

\subsubsection{Arrangement $\cC_7$.}
The characteristic variety $\Char(\cC_7)$ consists of six (resp. four) non-essential coordinate components associated with 
the six triple points of $\cC_7$ (resp. four $\cC_6$-subarrangements) of dimension~2 and depth~1. In addition, there is 
one extra character of order two, namely,
$$\chi_{7}:=(1,-1,-1,1,-1,-1,1)$$ 
of depth~2.\footnote{the subscript 7 refers to the arrangement $\cC_7$. Similar notation will be used in the examples
that follow. A second subscript (when necessary) will be used to index the characters considered.}
In order to check the value of the depth, one needs to find all marked orbifold pencils in $(\cX_7,\chi_{7})$ 
of target $(\CC_{2,2},\rho)$ where $\rho:=(-1,-1)$ is the only possible non-trivial character of $\CC_{2,2}$. 
Two such independent pencils are the following, 
$$\psi_{7,1}:=f_C|_{\cX_7}: \cX_7 \to \PP^1\setminus \{[0:1],[1:0],[1:1]\} \to 
\PP^1_{(2,[1:0]),(2,[1:1]),(\infty [0:1])}$$
and 
$$\psi_{7,2}:=f_{SC}|_{\cX_7}: \cX_7 \to \PP^1_{(2,[1:0]),(2,[1:1]),(\infty [0:1])}.$$
This is the maximal number of independent pencils by Theorem~\ref{thm-main}.

\subsubsection{Arrangement $\cC_8$.}
The characteristic variety $\Char(\cC_8)$ consists of six (resp. five) non-essential coordinate components associated 
with the six triple points of $\cC_8$ (resp. four $\cC_6$-subarrangements) of dimension~2 and depth~1. In addition, 
there is one 3-dimensional non-essential coordinate component of depth~2 associated with its quadruple point
(see Remark~\ref{rem-main-thm}\ref{rem-non-orbifold}). 

Consider the following \SC\ pencil
$$\psi_{8,1}:=f_{SC}|_{\cX_8}: \cX_8 \to \PP^1_{(2,[1:1]),(\infty [0:1]),(\infty [1:0])}.$$
Computation of the induced map on the variety of characters shows that this map yields the only 
non-coordinate translated component of dimension~1 and depth~1 observed in the references above.
Finally, there are two characters of order two, namely,
$$
\array{cc}
\chi_{8,1}:=(1,-1,-1,1,-1,-1,1,1) & \text{\ and}\\
\chi_{8,2}:=(-1,1,-1,-1,1,-1,1,1) &
\endarray
$$
of depth~2. In order to check the value of the depth, one needs to find two marked orbifold pencils 
on $(\cX_8,\chi_{8,1})$ with target $(\CC_{2,2},\rho)$, where 
$$\CC_{2,2}:=\PP^1_{(2,[1:0]),(2,[1:1]),(\infty [0:1])}$$
and $\rho:=(-1,-1,1)$ is the only non-trivial character of $\CC_{2,2}$. 
Two such independent pencils can, for example, be given as follows
$$\psi_{8,2}:=f_C|_{\cX_8}: \cX_8 \to \PP^1\setminus \{[0:1],[1:0],[1:1]\} \to 
\PP^1_{(2,[1:0]),(2,[1:1]),(\infty [0:1])}$$
and 
$$\psi_{8,3}:=f_{SC}|_{\cX_8}: \cX_8 \to \PP^1_{(2,[1:1])} \setminus \{[1:0],[0:1]\} \to 
\PP^1_{(2,[1:0]),(2,[1:1]),(\infty [0:1])}.$$

\subsubsection{Arrangement $\cC_9$.}
The characteristic variety $\Char(\cC_9)$ consists of four (resp. eleven) non-essential coordinate components 
associated with the four triple points of $\cC_9$ (resp. eleven $\cC_6$-subarrangements), which have dimension~2 
and depth~1. In addition, there are three 3-dimensional non-essential coordinate components of depth~2 associated 
with the quadruple points of $\cC_9$. Consider the following \SC\ pencil 
$$\psi_{9,1}:=f_{SC}|_{\cX_9}: \cX_9 \to \PP^1 \setminus \{[1:0],[0:1],[1:1]\}.$$
Computations of the induced map on the variety of characters show that this pencil yields the only non-coordinate 
translated component of dimension~2 and depth~1 observed in the references above.

Finally, there are also three characters of order two 
$$
\array{cc}
\chi_{9,1}:=(-1,-1,1,-1,-1,1,1,1,1), &\\
\chi_{9,2}:=(-1,1,-1,-1,1,-1,1,1,1), & \text{\ and}\\
\chi_{9,3}:=(1,-1,-1,1,-1,-1,1,1,1)
\endarray
$$
of depth~2. In order to check the value of the depth, one needs to find two independent marked orbifold pencils on 
$(\cX_9,\chi_{9,1})$ with target $(\CC_{2,2},\rho)$ where $\CC_{2,2}:=\PP^1_{(2,[0:1]),(2,[1:0]),(\infty [1:1])}$ and
$\rho:=(-1,-1,1)$ is the only non-trivial character on $\CC_{2,2}$. 
Two such independent pencils can be given, for example, as follows
$$\psi_{9,2}:=f_C|_{\cX_9}: \cX_9 \to \PP^1\setminus \{[0:1],[1:0],[1:1]\} \to 
\PP^1_{(2,[0:1]),(2,[1:0]),(\infty [1:1])}$$
and 
$$\psi_{9,3}:=f_{SC}|_{\cX_9}: \cX_9 \to \PP^1\setminus \{[0:1],[1:0],[1:1]\} \to 
\PP^1_{(2,[0:1]),(2,[1:0]),(\infty [1:1])}.$$

\begin{remark}
Note that the depth $2$ characters in $\Char(\cC_8)$ and $\Char(\cC_9)$ lie in the intersection of
positive dimensional components and this fact forces them to have depth greater than~$1$, 
see~\cite[Proposition~5.9]{acm-charvar-orbifolds}.
\end{remark}

\subsection{Comments on Independence of Pencils}
\mbox{}
\begin{itemize}
 \item \textbf{Depth conditions on the target:}
First of all note that the condition on the target $(\cC,\rho)$ to have $d(\rho)>0$ is essential in the discussion 
above, i.e. pencils with target satisfying $d(\rho)=0$ may not contribute to the characteristic varieties. 
For instance, the space $\cX_6$ also admits several global quotient pencils coming from the {\SC} pencil, namely
$$\psi'_{6}:=f_{SC}|_{\cX_6}: \cX_6 \to \PP^1_{(2,[0:1]),(2,[1:0]),(2,[1:1])} \to \PP^1_{(2,[0:1]),(2,[1:0])}.$$
However, the orbifold $\PP_{2,2}$ is a global quotient orbifold whose orbifold fundamental group is abelian, so 
no non-trivial characters belong to its characteristic variety.

\item \textbf{Independence of Pencils.} Here is an explicit argument for independence of pencils for one of
the cases discussed in last section. 
Consider the pencils $\psi_{9,2}$ and $\psi_{9,3}$ described above as marked pencils from $(\cX_9,\chi_{9,1})$
having $(\CC_{2,2},\rho)$ as target. The marking produces the following commutative diagrams:
$$
\array{rcl}
X_{9,2} & \rightmap{\Psi_{9,2}} & C_2\\
{[}x:y:z:w] & \mapsto & [\ell_1\ell_4:w] \\
\downmap{\pi} & & \downmap{\tilde \pi}\\
\cX_9 & \rightmap{\psi_{9,2}} & \CC_{2,2}\\
{[}x:y:z] & \mapsto & [\ell_1\ell_4:\ell_2\ell_5],
\endarray
$$
and
$$
\array{rcl}
X_{9,2} & \rightmap{\Psi_{9,3}} & C_2\\
{[}x:y:z:w] & \mapsto & [\ell_1\ell_4\ell_7:w\ell_8] \\
\downmap{\pi} & & \downmap{\tilde \pi}\\
\cX_9 & \rightmap{\psi_{9,3}} & \CC_{2,2}\\
{[}x:y:z] & \mapsto & [\ell_1\ell_4\ell_7^2:\ell_2\ell_5\ell_8^2],
\endarray
$$
where $X_{9,2}$ is contained in $\{[x:y:z:w] \mid w^2=\ell_1\ell_4\ell_2\ell_5\}$, $C_2:=\PP^1\setminus \{[1:1],[1:-1]\}$ and 
$\tilde \pi$ is given by $[u:v]\mapsto [u^2:v^2]$.

Consider $\gamma_{i,k}$, $i=3,6,7,8,9$, $k=1,2$ the lifting of meridians around $\ell_i$ in $X_{9,2}$. 
Also denote by $\ZZ[\ZZ_2]$ the ring of deck transformations of $\tilde \pi$ as before, where $\ZZ_2$ acts 
by multiplication by $\xi_2=(-1)$.
Note that, as before $\Psi_{9,2}(\gamma_{3,k})=\Psi_{9,2}(\gamma_{3,k})=(-1)^k$ and 
$\Psi_{9,3}(\gamma_{4,k})=\Psi_{9,3}(\gamma_{4,k})=(-1)^{k+1}$. However, $\Psi_{9,2}(\gamma_{9,k})=0$ and
$\Psi_{9,3}(\gamma_{9,k})=(-1)^k$. Therefore $\psi_{9,2}$ and $\psi_{9,3}$ are
independent pencils of $(\cX_9,\chi_{9,1})$ with target $(\CC_{2,2},\rho)$.
\end{itemize}

\section{Curve Arrangements}
\label{sec-zariski-pair}
Consider the space $\cM$ of sextics with the following combinatorics:
\begin{enumerate}
\item $\cC$ is a union of a smooth conic $\cC_2$ and a quartic $\cC_4$;
\item $\Sing(\cC_4) = \{P, S\}$ where $S$ is a cusp of type $\mathbb A_4$ and $P$ is a node of type $\mathbb A_1$;
\item $\cC_2 \cap \cC_4 = \{S, R\}$ where $S$ is a $\mathbb D_7$ on $\cC$ and $R$ is a $\mathbb A_{11}$ on $\cC$.
\end{enumerate}
In~\cite{ArtalCogolludo} it is shown that $\cM$ has two connected components, say $\cM^{(1)}$ and $\cM^{(2)}$. 
The following are equations for curves  in each connected component:
\begin{equation*}
\array{rl}
f_6^{(1)}=f_2^{(1)}f_4^{(1)}:= & \left(  \left( y+3x \right) z+\frac{3y^2}{2} \right) \\
& \left( x^2{z}^{2}- \left( x{y}^{2}+\frac{15}{2}\,x^2y+\frac{9}{2}x^3 \right) 
z-3x\,{y}^{3}-\frac{9x^2y^2}{4}+\frac{y^4}{4} \right) 
\endarray
\end{equation*}
for $\cC_6^{(1)}\in \cM^{(1)}$ and 
\begin{equation*}
\array{c}
f_6^{(2)}=f_2^{(2)}f_4^{(2)}:= \left(  \left( y+\frac{x}{3} \right) z-\frac{y^2}{6}
 \right)
\left( x{z}^{2}- \left( x{y}^{2}+\frac{9x^2y}{2}+\frac{3x^3}{2}\right) z
+\frac{y^4}{4}+\frac{3x^2y^2}{4} \right) 
\endarray
\end{equation*}
for $\cC_6^{(2)}\in \cM^{(2)}$. 

The curves $\cC_6^{(1)}$ and $\cC_6^{(2)}$ form a Zariski pair since their fundamental groups are not isomorphic.
This cannot be detected by Alexander polynomials since both are trivial. In~\cite{ArtalCogolludo} the existence of 
an essential coordinate character of order two in the characteristic variety of $\cC_6^{(2)}$ was shown enough to 
distinguish both fundamental groups, since the characteristic variety of $\cC_6^{(1)}$ is trivial.

By Theorem~\ref{thm-main}\ref{thm-main-part2} this fact can also be obtained by looking at possible orbifold pencils.
Note that there exists a conic $\cQ:=\{q=0\}$ passing through $S$ and $R$ such that $(\cQ,\cC_4^{(1)})_S=4$, 
$(\cQ,\cC_4^{(2)})_S=5$, and $(\cQ,\cC_2^{(2)})_R=3$, $(\cQ,\cC_2^{(2)})_R=3$. Consider $L:=\{\ell=0\}$ the tangent 
line to $\cQ$ at $S$. One has the following list of multiplicities of intersection:
$$
\array{ll}
(\cQ,\cC_2^{(2)}+2L)_S=(\cQ,\cC_4^{(2)})_S=5 & (\cQ,\cC_2^{(2)}+2L)_R=(\cQ,\cC_4^{(2)})_R=3\\
(\cC_4^{(2)},2\cQ)_S=(\cC_4^{(2)},\cC_2^{(2)}+2L)_S=10 & (\cC_4^{(2)},2\cQ)_R=(\cC_4^{(2)},\cC_2^{(2)}+2L)_R=6\\
(\cC_2^{(2)},\cC_4^{(2)})_S=(\cC_2^{(2)},2\cQ)_S=2 & (\cC_2^{(2)},\cC_4^{(2)})_R=(\cC_2^{(2)},2\cQ)_R=6\\
(L,\cC_4^{(2)})_S=(L,2\cQ)_S=4 & (L,\cC_4^{(2)})_R=(L,2\cQ)_R=0.
\endarray
$$
By~\cite{ji-noether}, this implies that $(\cC_2^{(2)}+2L,\cC_2^{(2)},2\cQ)$ are members of a pencil of quartics.
In other words, there is a marked orbifold pencil from $\cC:=\PP^2\setminus \cC_6^{(2)}$ marked with $\chi:=(-1,1)$ to 
$\PP^1_{(2,[0,1]),(2,[1:0]),(\infty [1:1])}$ given by $[x:y:z]\mapsto [f_2^{(2)}\ell^2:q^2]$ whose target mark
is the character $\rho:=(-1,-1,1)$.

\end{document}